\newtheorem{theorem}{Theorem}[section]
\newtheorem{proposition}[theorem]{Proposition}
\newtheorem{corollary}[theorem]{Corollary}
\newtheorem{remark}[theorem]{Remark}
\newtheorem{definition}[theorem]{Definition}
\newcommand{\N}{\mathbb{N}}
\newcommand{\R}{\mathbb{R}}
\newcommand{\eps}{\varepsilon}
\newcommand{\se}{\subseteq}
\newcommand{\wt}[1]{\widetilde{#1}}
\DeclareMathOperator{\argmax}{argmax}
\DeclareMathOperator{\argmin}{argmin}
\DeclareMathOperator{\Prox}{Prox}
\title{On the uniform convexity of the squared distance}
\author{Andrei Sipo\c s${}^{a,b,c}$\\[2mm]
\footnotesize ${}^a$Research Center for Logic, Optimization and Security (LOS), Department of Computer Science,\\
\footnotesize Faculty of Mathematics and Computer Science, University of Bucharest,\\
\footnotesize Academiei 14, 010014 Bucharest, Romania\\[1mm]
\footnotesize ${}^b$Simion Stoilow Institute of Mathematics of the Romanian Academy,\\
\footnotesize Calea Grivi\c tei 21, 010702 Bucharest, Romania\\[1mm]
\footnotesize ${}^c$Institute for Logic and Data Science,\\
\footnotesize Popa Tatu 18, 010805 Bucharest, Romania\\[2mm]
\footnotesize Email: andrei.sipos@fmi.unibuc.ro\\
}
\date{}
\begin{document}

\maketitle

\begin{abstract}
In 1983, Z\u alinescu showed that the squared norm of a uniformly convex normed space is uniformly convex on bounded subsets. We extend this result to the metric setting of uniformly convex hyperbolic spaces. We derive applications to the convergence of shadow sequences and to proximal minimization.

\noindent {\em Mathematics Subject Classification 2020}: 51F99, 52A55, 53C23.

\noindent {\em Keywords:} Uniform convexity, hyperbolic spaces, uniformly convex functions, shadow sequences, proximal maps.
\end{abstract}

\section{Introduction}

In 1936, Clarkson \cite{Cla36} introduced the notion of uniform convexity for normed spaces. Over the years, this has proven to be a highly fruitful abstraction, along with its companion, the `dual' notion of uniform smoothness, both of them jointly capturing the most relevant properties of the most widely known examples of non-Hilbert Banach spaces: the $L^p$ spaces.

In 1983, Z\u alinescu \cite{Zal83} showed that uniformly convex normed spaces are also uniformly convex in a different sense, namely their squared norm is a uniformly convex function on bounded subsets. Again, this additional property has repeatedly shown its worth in the study of nonlinear operators, as seen in the papers \cite{Xu,XuRoach} and the ones citing those.

This idea of studying nonlinear operators, in contrast to the usual focus of functional analysis on (continuous) linear ones, has naturally given way in recent decades to its generalization to `nonlinear spaces', metric spaces which retain some of the additional structure given by the linearity of a normed space. Such a kind of space, aiming to preserve the idea of convexity, was introduced by  Kohlenbach in 2005 \cite{Koh05} under the name of {\it $W$-hyperbolic spaces} (see the next section for a definition and \cite[pp. 384--388]{Koh08} for a detailed discussion on the relationship between various definitions of hyperbolicity).

Kohlenbach was also motivated by the research program of `proof mining', which he largely led during that time, a program which aimed to apply tools from proof theory (a branch of mathematical logic) to proofs in mainstream mathematics in order to uncover information which may not be readily apparent. This logical study of ordinary mathematical proofs usually leads the practitioner towards proof-theoretically natural and flexible axioms for various classes of mathematical structures, this also being the case with this notion of $W$-hyperbolicity. Proof mining, as pointed out below, will also drive some of our own choices of presenting our results. For more information about proof mining, see Kohlenbach's 2008 monograph \cite{Koh08} or his more recent ICM survey \cite{Koh19}.

The proper generalization of uniform convexity to this $W$-hyperbolic setting was found by Leu\c stean \cite{Leu07,Leu10} in the form of $UCW$-hyperbolic spaces. Some of the properties of uniformly convex normed spaces were found to generalize naturally to $UCW$-hyperbolic spaces in those papers, as well as in \cite{KohLeu10}. Typical examples of non-normed $UCW$-hyperbolic spaces are provided by the class of CAT(0) spaces, the canonical nonlinear generalization of inner product spaces, as studied in metric geometry. For a recent example of a $UCW$-hyperbolic space which is neither a CAT(0) space, nor a convex subset of a normed space, see \cite[Section 4]{PinSip}.

All these classes of nonlinear spaces, with their basic definitions and properties, may be found in Section~\ref{sec:prelim}. In addition, we show that the fixed point sets of (asymptotically) nonexpansive mappings on $UCW$-hyperbolic spaces are convex in a more general, quantitative way, as inspired by proof mining. (Proof mining can frequently help in finding the most natural logical form for a given property.)

The main result of this paper, as shown in Section~\ref{sec:main}, generalizes Z\u alinescu's result to $UCW$-hyperbolic spaces, with the squared norm being superseded by the (section of the) squared distance. The result is given, again, in a quantitative way, with a formula for the modulus for the uniform convexity of the squared distance, as inspired by the proof-theoretic analysis of Z\u alinescu's result in \cite{BacKoh18}.

The final two sections are devoted to applications, thus vindicating the study of this property in this nonlinear setting. Section~\ref{sec:shadow} extends to complete $UCW$-hyperbolic spaces the classical results for the convergence of shadow sequences associated to (quasi)-Fej\'er monotone sequences. These convergence results are given in terms of `rates of metastability' (a concept originating in proof mining, to be defined and motivated in said section). Section~\ref{sec:proximal} shows for the first time that one can properly define proximal mappings, and is thus a pilot study in convex optimization in complete $UCW$-hyperbolic spaces.

\section{Preliminaries}\label{sec:prelim}

\subsection{Nonlinear spaces}

One says that a metric space $(X,d)$ is {\it geodesic} if for any two points $x$, $y \in X$ there is a {\it geodesic} that joins them, i.e.\ a mapping $\gamma : [0,1] \to X$ such that $\gamma(0)=x$, $\gamma(1)=y$ and for any $t$, $t' \in [0,1]$ we have that
$$d(\gamma(t),\gamma(t')) = |t-t'| d(x,y).$$

Generally, geodesics need not be unique, and, thus, one is led to the following definition, which considers geodesics on a metric space as an additional structure\footnote{The convexity function $W$ was first considered by Takahashi in \cite{Tak70} where a triple $(X,d,W)$ satisfying $(W1)$ is called a convex metric space. The notion used here, frequently considered nowadays to be the nonlinear generalization of convexity in normed spaces, was introduced by Kohlenbach in \cite{Koh05}. As said in the Introduction, see \cite[pp. 384--388]{Koh08} for a detailed discussion on the relationship between various definitions of hyperbolicity and on the proof-theoretical considerations that ultimately led to the adoption of this one. We only mention here that this notion is more general than that of hyperbolic spaces in the sense of Reich and Shafrir \cite{ReiSha90}, and slightly more restrictive than the setting due to Goebel and Kirk \cite{GoeKir83} of spaces of hyperbolic type. } (and not as a property), which is required to satisfy additional properties.

\begin{definition}
A {\bf $W$-hyperbolic space} is a triple $(X,d,W)$ where $(X,d)$ is a metric space and $W: X^2 \times [0,1] \to X$ such that, for all $x$, $y$, $z$, $w \in X$ and $\lambda$, $\mu \in [0,1]$, we have that
\begin{enumerate}[(W1)]
\item $d(z,W(x,y,\lambda)) \leq (1-\lambda)d(z,x) + \lambda d(z,y)$;
\item $d(W(x,y,\lambda),W(x,y,\mu)) = |\lambda-\mu|d(x,y)$;
\item $W(x,y,\lambda)=W(y,x,1-\lambda)$;
\item $d(W(x,z,\lambda),W(y,w,\lambda))\leq(1-\lambda) d(x,y) + \lambda d(z,w)$.
\end{enumerate}
\end{definition}

Clearly, any normed space may be made into a $W$-hyperbolic space in a canonical way. 

A subset $C$ of a $W$-hyperbolic space $(X,d,W)$ is called {\it convex} if, for any $x$, $y \in C$ and $\lambda \in [0,1]$, $W(x,y,\lambda) \in C$. If $(X,d,W$) is a $W$-hyperbolic space, $x$, $y \in X$ and $\lambda \in [0,1]$, we denote the point $W(x,y,\lambda)$ by $(1-\lambda)x+\lambda y$. We will, also, mainly write $\frac{x+y}2$ for $\frac12 x + \frac12 y$.

\begin{proposition}\label{quad}
Let $(X,d,W)$ be a $W$-hyperbolic space, $\lambda \in [0,1]$ and $x$, $y$, $a \in X$. Then
$$d^2\left((1-\lambda)x+\lambda y,a\right) \leq (1-\lambda)d^2(x,a)+\lambda d^2(y,a).$$
\end{proposition}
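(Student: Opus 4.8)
The plan is to reduce the squared inequality to the first axiom $(W1)$ followed by an elementary one-variable convexity estimate. First I would abbreviate $\alpha := d(x,a)$ and $\beta := d(y,a)$, and invoke $(W1)$ (with the roles chosen so that $z = a$) to obtain the unsquared triangle-type bound
$$d\left((1-\lambda)x+\lambda y,\,a\right) \leq (1-\lambda)\alpha + \lambda\beta.$$
Both sides here are nonnegative, so squaring is legitimate and monotone, giving
$$d^2\left((1-\lambda)x+\lambda y,\,a\right) \leq \left((1-\lambda)\alpha + \lambda\beta\right)^2.$$

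The remaining step is purely scalar: I would show that $\left((1-\lambda)\alpha + \lambda\beta\right)^2 \leq (1-\lambda)\alpha^2 + \lambda\beta^2$, which is just the convexity of $t \mapsto t^2$ on $\R$ applied to the convex combination of $\alpha$ and $\beta$. Rather than cite Jensen's inequality, I would verify it by a direct expansion, since the difference factors cleanly:
$$(1-\lambda)\alpha^2 + \lambda\beta^2 - \left((1-\lambda)\alpha + \lambda\beta\right)^2 = \lambda(1-\lambda)(\alpha-\beta)^2 \geq 0,$$
where the last inequality holds because $\lambda, 1-\lambda \geq 0$ and squares are nonnegative. Chaining the two displayed bounds and substituting back $\alpha = d(x,a)$, $\beta = d(y,a)$ then yields exactly the claimed inequality.

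There is no real obstacle here; the only points requiring a moment's care are that the quantities being squared are nonnegative (so that the order is preserved under squaring) and that the scalar inequality is stated for arbitrary reals rather than just the particular distances involved. It is worth noting that the nonnegative slack term $\lambda(1-\lambda)(\alpha-\beta)^2$ quantifies precisely how far the estimate is from an equality, and this surplus is exactly the kind of quantity that the later, more refined uniform-convexity moduli of the paper will need to control; recording it here, even though it is discarded in this proposition, is the natural foreshadowing of the quantitative arguments to come.
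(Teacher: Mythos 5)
Your proposal is correct and matches the paper's proof exactly: apply $(W1)$ to bound $d\left((1-\lambda)x+\lambda y,a\right)$ by $(1-\lambda)d(x,a)+\lambda d(y,a)$, then square and use the convexity of $t\mapsto t^2$. Your explicit factorization of the slack as $\lambda(1-\lambda)(\alpha-\beta)^2$ is a nice verification but does not change the argument --- the paper simply cites convexity of the square function at that step.
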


\begin{proof}
Using $(W1)$, we get that
$$d^2\left((1-\lambda)x+\lambda y,a\right) \leq \left((1-\lambda)d(x,a)+\lambda d(y,a)\right)^2 \leq (1-\lambda)d^2(x,a)+\lambda d^2(y,a),$$
the last inequality following from the convexity of the square function.
\end{proof}

As per \cite{Koh05,Leu07}, a particular nonlinear class of $W$-hyperbolic spaces is the one of CAT(0) spaces, introduced by A. Aleksandrov \cite{Ale51} and named as such by M. Gromov \cite{Gro87}, which may be defined, by the discussion in \cite[pp. 387--388]{Koh08}, as those $W$-hyperbolic spaces $(X,d,W)$ such that, for any $a$, $x$, $y \in X$,
\begin{equation}\label{cat0}
d^2\left(\frac{x+y}2,a\right) \leq \frac12 d^2(x,a) + \frac12 d^2(y,a) - \frac14 d^2(x,y).
\end{equation}

In particular, any inner product space is a CAT(0) space, and, indeed, as experience shows, CAT(0) spaces may be regarded as the rightful nonlinear generalization of inner product spaces.

Uniform convexity was first introduced by Leu\c stean \cite{Leu07,Leu10}  in this hyperbolic setting, as motivated by \cite[p. 105]{GoeRei84}. The monotonicity condition was justified by proof-theoretical considerations.

\begin{definition}
If $(X,d,W)$ is a $W$-hyperbolic space, then a {\bf modulus of uniform convexity} for $(X,d,W)$ is a function $\eta :(0, \infty) \times (0,2] \to (0,1]$ such that, for any $r >0$, any $\eps \in (0,2]$ and any $a$, $x$, $y \in X$ with $d(x,a) \leq r$, $d(y,a) \leq r$, $d(x,y) \geq \eps r$, we have that
$$d\left(\frac{x+y}2,a\right) \leq (1-\eta(r,\eps))r.$$
We call the modulus {\bf monotone} if, for any $r$, $s >0$ with $s \leq r$ and any $\eps \in (0,2]$, we have that $\eta(r,\eps) \leq \eta(s,\eps)$.

A {\bf $UCW$-hyperbolic space} is a $W$-hyperbolic space that admits a monotone modulus of uniform convexity.
\end{definition}

As remarked in \cite[Proposition 2.6]{Leu07}, CAT(0) spaces are $UCW$-hyperbolic spaces having as a modulus of uniform convexity the simple function $(r,\eps) \mapsto \eps^2/8$.

In a metric space $(X,d)$, a nonempty subset $S$ of $X$ is a {\it Chebyshev set} if, for any $x \in X$, there exists a unique $y \in S$ such that, for any $z \in S$, $d(x,y) \leq d(x,z)$ -- this defines the {\it projection mapping} $P_S:X \to S$. By \cite[Proposition 2.4]{Leu10}, any closed, convex, nonempty subset of a complete $UCW$-hyperbolic space is a Chebyshev set. Also, in any metric space $(X,d)$, we say that a sequence $(x_n)$ is {\it Fej\'er monotone} with respect to a subset $S$ of $X$ if, for all $n \in \N$ and $q \in S$, $d(x_{n+1},q)\leq d(x_n,q)$.

For a self-mapping $T$ of a metric space $(X,d)$, we denote by $Fix(T)$ its fixed point set.

\subsection{Nonexpansive mappings}

A self-mapping $T$ of a metric space $(X,d)$ is called {\it nonexpansive} if, for any $x$, $y \in X$, $d(Tx,Ty) \leq d(x,y)$ -- it is immediate that, for such a mapping, $Fix(T)$ is closed. By \cite[Corollary 3.7]{Leu10}, if $X$ is a bounded complete nonempty $UCW$-hyperbolic space, then any nonexpansive self-mapping of $X$ has a fixed point.

The following proposition, a quantitative generalization of the convexity of the fixed point set as inspired by proof mining, uses a quantitative version of the argument used in the proof of the strict convexity of $UCW$-hyperbolic spaces \cite[Proposition 5]{Leu07}. The proposition is a significant nonlinear generalization of the argument in \cite[Lemma 2.3]{Koh11}.

\begin{proposition}
Let $\eta :(0,\infty) \times (0,2] \to (0,\infty)$ be monotone and let $(X,d,W)$ be a $UCW$-hyperbolic space admitting $\eta$ as a modulus. Let $T: X \to X$ be nonexpansive, $x$, $y \in X$, $\eps > 0$, $b > 0$ with $d(x,y) \leq b$. Let $\lambda \in [0,1]$ and set $z:=(1-\lambda)x+\lambda y$. Put
$$\delta:=\min\left( \frac\eps2,b,\frac{\eps^2}{16b}\eta\left(2b,\min\left(\frac{\eps}{2b},2\right)\right)\right).$$
Assume that $d(x,Tx) \leq \delta$ and that $d(y,Ty) \leq \delta$. Then $d(z,Tz) \leq \eps$.
\end{proposition}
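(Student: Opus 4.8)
The plan is to argue by contradiction, assuming $d(z,Tz) > \varepsilon$, and to extract a clash with the definition of $\delta$. First I would dispose of the trivial regime. Writing $D := d(x,y) \le b$ and $p := Tz$, two applications of the triangle inequality together with nonexpansiveness and the hypotheses $d(x,Tx), d(y,Ty) \le \delta$ give the unconditional estimate $d(z,Tz) \le 2\min(\lambda,1-\lambda)D + \delta \le D + \delta$. Hence, whenever $\varepsilon \ge 2b$, we get $d(z,Tz) \le b + \delta \le \frac{\varepsilon}{2} + \frac{\varepsilon}{2} = \varepsilon$ directly from $\delta \le \varepsilon/2$, and there is nothing to prove. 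So I may assume $\varepsilon < 2b$, in which case $\min(\varepsilon/(2b),2) = \varepsilon/(2b) =: \tilde\varepsilon \in (0,1] \subseteq (0,2]$ is a legitimate separation parameter for the modulus.

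Next I record the positions of $z$ and $p = Tz$ relative to the endpoints. By $(W2)$ we have $d(z,x) = \lambda D$ and $d(z,y) = (1-\lambda)D$, while nonexpansiveness gives $d(p,x) \le d(z,x) + \delta = \lambda D + \delta =: r_x$ and $d(p,y) \le (1-\lambda)D + \delta =: r_y$. The heart of the argument is to apply the monotone modulus to the pair $\{z,p\}$ about \emph{each} of the two centers $x$ and $y$. Setting $m := \frac{z+p}{2}$ and noting that $r_x, r_y \le D + \delta \le 2b$, the separation hypothesis is met, since $\tilde\varepsilon r_x \le \frac{\varepsilon}{2b}\cdot 2b = \varepsilon < d(z,p)$ and likewise for $r_y$. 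Thus the modulus yields $d(m,x) \le (1-\eta(r_x,\tilde\varepsilon))r_x$ and $d(m,y) \le (1-\eta(r_y,\tilde\varepsilon))r_y$.

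I would then feed these two bounds into the single triangle inequality $D = d(x,y) \le d(x,m) + d(m,y)$. The key bookkeeping is that the radii were chosen \emph{asymmetrically} so that $r_x + r_y = D + 2\delta$ telescopes against $D$, leaving
$$\eta(r_x,\tilde\varepsilon)r_x + \eta(r_y,\tilde\varepsilon)r_y \le 2\delta.$$
Monotonicity of $\eta$ in its first argument (using $r_x, r_y \le 2b$) lets me replace both moduli by the single value $\eta_0 := \eta(2b,\tilde\varepsilon) = \eta(2b,\min(\varepsilon/(2b),2))$ that appears in the definition of $\delta$, giving $\eta_0(D + 2\delta) \le 2\delta$ and hence $D \le 2\delta/\eta_0 \le \varepsilon^2/(8b)$ after substituting $\delta \le \frac{\varepsilon^2}{16b}\eta_0$. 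Combining this with the unconditional estimate $d(z,Tz) \le D + \delta$, with $\delta \le \varepsilon/2$, and with $\varepsilon < 2b$, yields $d(z,Tz) < \frac{\varepsilon^2}{8b} + \frac{\varepsilon}{2} < \frac{\varepsilon}{4} + \frac{\varepsilon}{2} < \varepsilon$, contradicting $d(z,Tz) > \varepsilon$.

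The main obstacle, and the step demanding the most care, is precisely the alignment of parameters so that one evaluation of the modulus controls everything: one must choose the two radii asymmetrically (so the triangle-inequality slack telescopes to exactly $2\delta$ rather than to something proportional to the larger radius), confirm that the separation parameter $\tilde\varepsilon$ genuinely lies in $(0,2]$ — which is what forces the $\min(\cdot,2)$ and the preliminary reduction to $\varepsilon < 2b$ — and check that the radius arguments never exceed $2b$ so that monotonicity is applicable. Once these are arranged, the final arithmetic with the explicit $\delta$ is routine, the constant $16$ providing comfortable slack.
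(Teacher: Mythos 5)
Your proof is correct, and every side condition you flag does check out: $r_x,r_y\leq 2b$ holds because $\delta\leq b$ (this is where the otherwise mysterious middle term in the minimum defining $\delta$ gets used), the separation hypothesis $\tilde\eps\, r_x\leq\eps<d(z,Tz)$ is available under the contradiction assumption, and the reduction to $\eps<2b$ legitimizes $\tilde\eps\in(0,2]$, playing the role of the paper's opening observation that $d(z,Tz)\leq 3b$. You share with the paper its central mechanism — apply the monotone modulus twice to the midpoint $\frac{z+Tz}2$, with centers $x$ and $y$ and the asymmetric radii $r_x=\lambda d(x,y)+\delta$, $r_y=(1-\lambda)d(x,y)+\delta$ whose sum telescopes to $d(x,y)+2\delta$, then route $d(x,y)\leq d\left(\frac{z+Tz}2,x\right)+d\left(\frac{z+Tz}2,y\right)$ through that midpoint — but your endgame is genuinely different. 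The paper first splits off three degenerate cases ($\lambda\leq\frac{\eps}{4b}$, $1-\lambda\leq\frac{\eps}{4b}$, $d(x,y)\leq\frac\eps4$), precisely because its main case needs the pointwise lower bound $r_1>\frac{\eps^2}{16b}$ so that \emph{each} convexity gain $\eta\left(2b,\frac{\eps}{2b}\right)r_i$ separately beats $\delta$, yielding the immediate contradiction $d(x,y)<r_1+r_2-2\delta=d(x,y)$. You eliminate the case split by \emph{summing} the two modulus inequalities, getting $\eta_0(d(x,y)+2\delta)\leq 2\delta$ and hence a bound $d(x,y)\leq 2\delta/\eta_0\leq\frac{\eps^2}{8b}$ on the distance between the endpoints themselves, which you feed back into the unconditional estimate $d(z,Tz)\leq 2\min(\lambda,1-\lambda)d(x,y)+\delta\leq d(x,y)+\delta$ to contradict $d(z,Tz)>\eps$ — so degenerate configurations never need separate treatment. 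Your route is shorter and case-free with the very same $\delta$; what the paper's four-case structure buys in exchange is that it transfers essentially verbatim to the asymptotically nonexpansive analogue proved immediately afterwards, where the identical case analysis is reused with $T^n$ in place of $T$.
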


\begin{proof}
First of all, note that
$$d(Tz,x) \leq d(Tz,Tx) + \delta \leq d(z,x) + \delta = \lambda d(x,y) + \delta$$
and, similarly, that
$$d(Tz,y) \leq (1-\lambda)d(x,y) + \delta,$$
so
$$d(z,Tz) \leq d(z,x) + d(Tz,x) \leq 2\lambda d(x,y) + \delta \leq 3b,$$
i.e. we are only interested in cases where $\frac\eps{2b} \leq 2$, where
$$\min\left(\frac{\eps}{2b},2\right) = \frac{\eps}{2b}.$$

{\bf Case I.} $\lambda \leq \frac{\eps}{4b}$.

In this case, as before,
$$d(z,Tz) \leq 2\lambda d(x,y) + \delta \leq \eps.$$

{\bf Case II.} $1- \lambda \leq \frac{\eps}{4b}$. Similarly, using that
$$d(z,Tz) \leq d(z,y) + d(Tz,y) \leq 2(1-\lambda)d(x,y) + \delta \leq \eps.$$

{\bf Case III.} $d(x,y) \leq \frac\eps4$.

In this case,
$$d(z,Tz) \leq 2\lambda d(x,y) + \delta \leq \eps.$$

{\bf Case IV.} None of the above cases hold, so $\lambda > \frac{\eps}{4b}$, $1- \lambda > \frac{\eps}{4b}$ and $d(x,y) > \frac\eps4$.

Put $r_1:=\lambda d(x,y)+\delta$ and $r_2:=(1-\lambda )d(x,y) + \delta$.

We have that $r_1 \leq 2b$, so
$$\eta\left(r_1,\frac{\eps}{2b}\right) \geq \eta\left(2b,\frac{\eps}{2b}\right),$$
and $r_1 \geq \lambda d(x,y) > \frac{\eps^2}{16b}$.

Assume towards a contradiction that $d(z,Tz) > \eps \geq \frac{\eps}{2b} \cdot r_1$. Since $d(z,x) \leq r_1$ and $d(Tz,x) \leq r_1$, we have that
$$d\left(\frac{z+Tz}2,x\right) \leq \left(1-\eta\left(r_1,\frac{\eps}{2b}\right)\right) \cdot r_1 \leq \left(1-\eta\left(2b,\frac{\eps}{2b}\right)\right) \cdot r_1 < r_1 - \frac{\eps^2}{16b}\eta\left(2b,\frac{\eps}{2b}\right) \leq r_1 - \delta.$$

Similarly, we obtain that
$$d\left(\frac{z+Tz}2,y\right) < r_2 -\delta,$$
so
$$d(x,y) \leq d\left(\frac{z+Tz}2,x\right) + d\left(\frac{z+Tz}2,y\right) < r_1 + r_2 - 2\delta = d(x,y),$$
a contradiction.
\end{proof}

\begin{corollary}
Let $(X,d,W)$ be a $UCW$-hyperbolic space and $T: X \to X$ be a nonexpansive mapping. Then $Fix(T)$ is convex.
\end{corollary}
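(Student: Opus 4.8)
The plan is to obtain the corollary as a direct limiting consequence of the preceding Proposition, exploiting the fact that for a point $p \in \Fix(T)$ one has $d(p,Tp) = 0$, which lies below \emph{every} positive threshold $\delta$. In other words, the Proposition already does all the quantitative work: its role is precisely to make explicit how $\eps$ depends on the fixed-point defects $d(x,Tx)$ and $d(y,Ty)$, and for genuine fixed points these defects vanish.

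First I would fix a monotone modulus $\eta$ for $(X,d,W)$, which exists by the definition of a $UCW$-hyperbolic space. To establish convexity of $\Fix(T)$, I take arbitrary $x$, $y \in \Fix(T)$ and $\lambda \in [0,1]$, set $z := (1-\lambda)x + \lambda y$, and aim to show $z \in \Fix(T)$, i.e.\ $d(z,Tz) = 0$. I would choose any real $b > 0$ with $d(x,y) \leq b$ (for instance $b := \max(d(x,y),1)$), so that the hypothesis on $b$ in the Proposition is satisfied.

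Next, for an arbitrary $\eps > 0$, I would form the quantity $\delta$ exactly as in the Proposition; since $\eta$ is positive-valued, $\delta > 0$. Because $x$ and $y$ are fixed points, $d(x,Tx) = 0 \leq \delta$ and $d(y,Ty) = 0 \leq \delta$, so the Proposition applies and yields $d(z,Tz) \leq \eps$. As $\eps > 0$ was arbitrary, letting $\eps \to 0$ forces $d(z,Tz) = 0$, whence $Tz = z$ and $z \in \Fix(T)$.

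I do not anticipate any real obstacle here, as the heavy lifting has already been carried out in the Proposition. The only points requiring a little care are ensuring that $b$ is strictly positive (handled by the choice above; in the degenerate case $x = y$ one even has $z = x \in \Fix(T)$ at once) and observing that $\delta$ remains strictly positive, so that the vanishing defects of the fixed points do indeed satisfy the hypothesis of the Proposition for every $\eps$.
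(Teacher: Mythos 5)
Your proof is correct and is precisely the argument the paper intends: the corollary is stated as an immediate consequence of the preceding quantitative proposition, obtained by setting the fixed-point defects $d(x,Tx)=d(y,Ty)=0$ and letting $\eps \to 0$, exactly as you do. Your care about $b>0$ and $\delta>0$ is appropriate but routine, and nothing further is needed.
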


\subsection{Asymptotically nonexpansive mappings}

If $(\delta_n)$ is a sequence of non-negative reals such that $\delta_n \to 0$ (sometimes, the stronger condition $\sum \delta_n < \infty$ will be imposed), then a self-mapping $T$ of a metric space $(X,d)$ is called {\it asymptotically nonexpansive} w.r.t. $(\delta_n)$ if, for any $x$, $y \in X$ and $n \in \N$, $d(T^nx,T^ny) \leq (1+\delta_n)d(x,y)$ -- it is immediate that, for such a mapping, $Fix(T)$ is closed. By \cite[Theorem 3.3]{KohLeu10}, if $X$ is a bounded complete nonempty $UCW$-hyperbolic space, then any asymptotically nonexpansive self-mapping of $X$ has a fixed point.

\begin{proposition}
Define, for any suitable $b$, $\eta$, $u$, $B$, $\eps$:
\begin{align*}
\Theta_{b,\eta}(\eps)&:=\frac12 \cdot \min\left( \frac\eps2,b,\frac{\eps^2}{16b}\eta\left(2b,\min\left(\frac{\eps}{2b},2\right)\right)\right)\\
\gamma_{b,\eta,u}(\eps)&:=u\left(\frac{\Theta_{b,\eta}(\eps)}b\right)\\
N_{b,\eta,u,B}(\eps)&:=\gamma_{b,\eta,u}\left(\frac\eps{2+B}\right)\\
\Omega_{b,\eta,u,B}(\eps)&:=\frac1{(N_{b,\eta,u,B}(\eps)+1)(1+B)} \cdot \Theta_{b,\eta}\left(\frac\eps{2+B}\right).
\end{align*}
Let $\eta :(0,\infty) \times (0,2] \to (0,\infty)$ be monotone and let $(X,d,W)$ be a $UCW$-hyperbolic space admitting $\eta$ as a modulus. Let $(\delta_n)$ be a sequence of non-negative reals and $u : (0,\infty) \to \N$ be such that, for every $n \geq u(\eps)$, $\delta_n \leq \eps$. Let $T:X \to X$ be asymptotically nonexpansive w.r.t. $(\delta_n)$. Let $x$, $y \in X$, $b > 0$ with $d(x,y) \leq b$. Let $\lambda \in [0,1]$ and set $z:=(1-\lambda)x+\lambda y$.
\begin{enumerate}[(i)]
\item Let $\eps > 0$ and $n \geq \gamma_{b,\eta,u}(\eps)$. Assume that $d(x,T^nx) \leq \Theta_{b,\eta}(\eps)$ and that $d(y,T^ny) \leq \Theta_{b,\eta}(\eps)$. Then $d(z,T^nz) \leq \eps$.
\item Let $\eps > 0$ and $B \geq 0$ be such that, for every $n$, $\delta_n \leq B$. Assume that $d(x,Tx) \leq \Omega_{b,\eta,u,B}(\eps)$ and that $d(y,Ty) \leq \Omega_{b,\eta,u,B}(\eps)$. Then $d(z,Tz) \leq \eps$.
\end{enumerate}
\end{proposition}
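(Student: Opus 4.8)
The plan is to handle the two parts in sequence: part (i) is a quantitative refinement of the preceding proposition in which the nonexpansiveness of $T$ is replaced by the Lipschitz estimate $d(T^nx,T^ny)\le(1+\delta_n)d(x,y)$, and part (ii) is then reduced to part (i) by iterating and passing from $T^n$ back to the single step $T$.

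For part (i), writing $\Theta:=\Theta_{b,\eta}(\eps)$, I would mimic the four-case structure of the previous proof but with the two radii carrying the asymptotic factor: set $r_1:=(1+\delta_n)\lambda d(x,y)+\Theta$ and $r_2:=(1+\delta_n)(1-\lambda)d(x,y)+\Theta$, so that $d(z,x)\le r_1$ and $d(T^nz,x)\le(1+\delta_n)d(z,x)+\Theta=r_1$. The first thing to record is that, since $n\ge\gamma_{b,\eta,u}(\eps)=u(\Theta/b)$, the defining property of $u$ yields $\delta_n\le\Theta/b\le\tfrac12$ (using $\Theta\le b/2$); this gives $r_1,r_2\le 2b$ and the crude bound $d(z,T^nz)\le(2+\delta_n)\lambda d(x,y)+\Theta\le 3b$, which disposes of the regime $\eps>4b$ and lets me assume $\min(\eps/2b,2)=\eps/2b$. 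The three easy cases ($\lambda$ small, $1-\lambda$ small, $d(x,y)$ small) all follow from the crude bound, since then the relevant product is $\le\eps/4$ and a short computation gives $\le\tfrac{7}{8}\eps<\eps$. In the remaining case, arguing by contradiction from $d(z,T^nz)>\eps\ge(\eps/2b)r_1$, monotonicity and the modulus give $d\bigl(\tfrac{z+T^nz}{2},x\bigr)<r_1-2\Theta$ and symmetrically $d\bigl(\tfrac{z+T^nz}{2},y\bigr)<r_2-2\Theta$, using $r_1>\eps^2/16b$ together with $2\Theta\le\tfrac{\eps^2}{16b}\eta(2b,\eps/2b)$. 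Adding and applying the triangle inequality yields $d(x,y)<r_1+r_2-4\Theta=(1+\delta_n)d(x,y)-2\Theta$, i.e.\ $2\Theta<\delta_nd(x,y)$. The one genuinely new point compared with the nonexpansive case is that this leftover term must be absorbed: since $\delta_nd(x,y)\le(\Theta/b)\,b=\Theta$, we arrive at $2\Theta<\Theta$, the desired contradiction.

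For part (ii) the idea is to run part (i) at two consecutive iterates. A telescoping estimate $d(x,T^nx)\le\sum_{k=0}^{n-1}d(T^kx,T^k(Tx))\le n(1+B)\,d(x,Tx)\le n(1+B)\,\Omega_{b,\eta,u,B}(\eps)$, and likewise for $y$, shows from the definition of $\Omega_{b,\eta,u,B}(\eps)$ that for every $n\le N+1$ one has $d(x,T^nx),d(y,T^ny)\le\Theta_{b,\eta}(\eps/(2+B))$, where $N:=N_{b,\eta,u,B}(\eps)$. Since $N=\gamma_{b,\eta,u}(\eps/(2+B))$, part (i) with parameter $\eps/(2+B)$ is available at both $n=N$ and $n=N+1$, giving $d(z,T^Nz)\le\eps/(2+B)$ and $d(z,T^{N+1}z)\le\eps/(2+B)$. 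The final step combines these: writing $T^{N+1}z=T(T^Nz)$ and $Tz=T(z)$, the single-step Lipschitz bound gives $d(T^{N+1}z,Tz)\le(1+\delta_1)d(T^Nz,z)\le(1+B)\tfrac{\eps}{2+B}$, so
$$d(z,Tz)\le d(z,T^{N+1}z)+d(T^{N+1}z,Tz)\le\frac{\eps}{2+B}+(1+B)\cdot\frac{\eps}{2+B}=\eps.$$

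The main obstacle is precisely this last conversion. A naive attempt to control $d(z,Tz)$ through a single index $N$ by telescoping $d(T^Nz,Tz)\le\sum_{k=1}^{N-1}d(T^kz,T^{k+1}z)$ is circular, since each displacement is only bounded by $(1+B)d(z,Tz)$; moreover, a simple isometry (e.g.\ a rational rotation of the Euclidean plane) shows that $d(z,T^Nz)$ being small cannot by itself force $d(z,Tz)$ to be small, so the hypotheses on $x$ and $y$ must really be used. The resolution is to invoke part (i) at the two indices $N$ and $N+1$ simultaneously and to exploit that $T^{N+1}z$ is the image of $T^Nz$ under one application of $T$; this is exactly where the factor $(N+1)(1+B)$ and the rescaling $\eps/(2+B)$ in the definition of $\Omega_{b,\eta,u,B}$ originate.
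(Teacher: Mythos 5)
Your proof is correct and follows essentially the same route as the paper: the same four-case analysis with the modulus for part (i), driven by the same key quantitative fact that $n\geq u(\Theta/b)$ forces $\delta_n\leq\Theta/b$, and the identical two-index trick at $N$ and $N+1$ for part (ii). The only (cosmetic) difference is in part (i): the paper absorbs the factor $(1+\delta_n)$ into the displacement bound immediately, taking $r_1=\lambda d(x,y)+2\Theta$ exactly as in the nonexpansive case and contradicting $d(x,y)<d(x,y)$, whereas you carry $(1+\delta_n)$ inside the radii and absorb the leftover term $\delta_n d(x,y)\leq\Theta$ only at the final contradiction $2\Theta<\Theta$.
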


\begin{proof}
\begin{enumerate}[(i)]
\item First of all, put
$$\delta:= \min\left( \frac\eps2,b,\frac{\eps^2}{16b}\eta\left(2b,\min\left(\frac{\eps}{2b},2\right)\right)\right),$$
so $\Theta_{b,\eta}(\eps) = \delta/2$ and $\gamma_{b,\eta,u}(\eps)=u\left(\frac\delta{2b}\right)$.

Now, note that (using in the third inequality that $n \geq u(\delta/2b)$)
$$d(T^nz,x) \leq d(T^nz,T^nx) + \delta/2 \leq (1+\delta_n)d(z,x) + \delta/2 \leq d(z,x) + \delta/2 + \delta/2 = \lambda d(x,y) + \delta$$
and, similarly,
$$d(T^nz,y) \leq (1-\lambda)d(x,y) + \delta,$$
so
$$d(z,T^nz) \leq d(z,x) + d(T^nz,x) \leq 2\lambda d(x,y) + \delta \leq 3b,$$
i.e. we are only interested in cases where $\frac\eps{2b} \leq 2$, where
$$\min\left(\frac{\eps}{2b},2\right) = \frac{\eps}{2b}.$$

{\bf Case I.} $\lambda \leq \frac{\eps}{4b}$.

In this case, as before,
$$d(z,T^nz) \leq 2\lambda d(x,y) + \delta \leq \eps.$$

{\bf Case II.} $1- \lambda \leq \frac{\eps}{4b}$. Similarly, using that
$$d(z,T^nz) \leq d(z,y) + d(T^nz,y) \leq 2(1-\lambda)d(x,y) + \delta \leq \eps.$$

{\bf Case III.} $d(x,y) \leq \frac\eps4$.

In this case,
$$d(z,T^nz) \leq 2\lambda d(x,y) + \delta \leq \eps.$$

{\bf Case IV.} None of the above cases hold, so $\lambda > \frac{\eps}{4b}$, $1- \lambda > \frac{\eps}{4b}$ and $d(x,y) > \frac\eps4$.

Put $r_1:=\lambda d(x,y)+\delta$ and $r_2:=(1-\lambda )d(x,y) + \delta$.

We have that $r_1 \leq 2b$, so
$$\eta\left(r_1,\frac{\eps}{2b}\right) \geq \eta\left(2b,\frac{\eps}{2b}\right),$$
and $r_1 \geq \lambda d(x,y) > \frac{\eps^2}{16b}$.

Assume towards a contradiction that $d(z,T^nz) > \eps \geq \frac{\eps}{2b} \cdot r_1$. Since $d(z,x) \leq r_1$ and $d(T^nz,x) \leq r_1$, we have that
$$d\left(\frac{z+T^nz}2,x\right) \leq \left(1-\eta\left(r_1,\frac{\eps}{2b}\right)\right) \cdot r_1 \leq \left(1-\eta\left(2b,\frac{\eps}{2b}\right)\right) \cdot r_1 < r_1 - \frac{\eps^2}{16b}\eta\left(2b,\frac{\eps}{2b}\right) \leq r_1 - \delta.$$

Similarly, we obtain that
$$d\left(\frac{z+T^nz}2,y\right) < r_2 -\delta,$$
so
$$d(x,y) \leq d\left(\frac{z+T^nz}2,x\right) + d\left(\frac{z+T^nz}2,y\right) < r_1 + r_2 - 2\delta = d(x,y),$$
a contradiction.
\item Set $n:=N_{b,\eta,u,B}(\eps)$. Now,
$$d(x,T^nx) \leq \sum_{i \in [0,n)} d(T^ix,T^{i+1}x) \leq \sum_{i \in [0,n)}(1+\delta_i)d(x,Tx) \leq \Omega_{b,\eta,u,B}(\eps) \cdot n \cdot (1+B) \leq \Theta_{b,\eta}\left(\frac\eps{2+B}\right).$$
Similarly, we get that
$$d(x,T^{n+1}x) \leq \Omega_{b,\eta,u,B}(\eps) \cdot (n+1) \cdot (1+B) = \Theta_{b,\eta}\left(\frac\eps{2+B}\right)$$
and also that $d(y,T^ny) \leq \Theta_{b,\eta}\left(\frac\eps{2+B}\right)$ and $d(y,T^{n+1}y) \leq \Theta_{b,\eta}\left(\frac\eps{2+B}\right)$.

Since both $n$ and $n+1$ are greater or equal to $\gamma_{b,\eta,u}\left(\frac\eps{2+B}\right)$, we get, using (i), that $d(z,T^nz) \leq \frac\eps{2+B}$ and $d(z,T^{n+1}z) \leq \frac\eps{2+B}$. We have that
$$d(Tz,T^{n+1}z) \leq (1+\delta_1)d(z,T^nz) \leq (1+B) \cdot \frac\eps{2+B}$$
and, thus, that
$$d(z,Tz) \leq d(z,T^{n+1}z) + d(Tz,T^{n+1}z) \leq \frac\eps{2+B} + (1+B) \cdot \frac\eps{2+B} = \eps,$$
so we are done.
\end{enumerate}
\end{proof}

\begin{corollary}
Let $(X,d,W)$ be a $UCW$-hyperbolic space and $T: X \to X$ be an asymptotically nonexpansive mapping. Then $Fix(T)$ is convex.
\end{corollary}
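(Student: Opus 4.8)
The plan is to derive the statement directly from part (ii) of the preceding proposition, by checking that fixed points satisfy its hypotheses in the most trivial possible way. So I would fix $x$, $y \in Fix(T)$ and $\lambda \in [0,1]$, set $z := (1-\lambda)x + \lambda y$, and aim to show $d(z,Tz) = 0$, which is exactly $z \in Fix(T)$. If $x = y$ then $z = x$ is already a fixed point, so I may assume $x \neq y$ and put $b := d(x,y) > 0$, which gives the required bound $d(x,y) \leq b$. Since $X$ is $UCW$-hyperbolic it admits a monotone modulus $\eta$, and since $T$ is asymptotically nonexpansive it comes equipped with a sequence $(\delta_n)$ of non-negative reals with $\delta_n \to 0$.

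Next I would supply the two remaining data that part (ii) demands. Because $\delta_n \to 0$ the sequence is in particular bounded, so I can choose some $B \geq 0$ with $\delta_n \leq B$ for all $n$; and, again because $\delta_n \to 0$, I can choose a rate $u : (0,\infty) \to \N$ such that $\delta_n \leq \eps$ whenever $n \geq u(\eps)$. With $b$, $\eta$, $u$ and $B$ now in hand, the quantity $\Omega_{b,\eta,u,B}(\eps)$ is well-defined and strictly positive for every $\eps > 0$.

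Finally I would feed these into part (ii). Since $x$ and $y$ are fixed points, the hypotheses $d(x,Tx) = 0 \leq \Omega_{b,\eta,u,B}(\eps)$ and $d(y,Ty) = 0 \leq \Omega_{b,\eta,u,B}(\eps)$ hold trivially for every $\eps > 0$, so the proposition yields $d(z,Tz) \leq \eps$ for every $\eps > 0$. Letting $\eps \to 0$ forces $d(z,Tz) = 0$, i.e.\ $Tz = z$, whence $z \in Fix(T)$ and $Fix(T)$ is convex. I do not expect any genuine analytic obstacle here, as the substantive estimate was already carried out in the proposition; the only points requiring a word of care are the degenerate case $x = y$ and the existence of a valid bound $B$ and rate $u$, both of which are immediate consequences of $\delta_n \to 0$.
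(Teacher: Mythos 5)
Your proposal is correct and matches the paper's intent exactly: the corollary is stated without proof precisely because it follows immediately from part (ii) of the preceding proposition in the way you describe, with $d(x,Tx)=d(y,Ty)=0$ trivially satisfying the hypothesis $\leq \Omega_{b,\eta,u,B}(\eps)$ for every $\eps>0$. Your attention to the degenerate case $x=y$ (needed since the proposition requires $b>0$) and to the existence of $B$ and $u$ from $\delta_n \to 0$ is appropriate and complete.
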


\subsection{On the uniform convexity of the squared distance}

The recent interest in proof mining towards the uniform convexity on bounded subsets of the squared norm of a uniformly convex normed space stemmed from its crucial use in the quantitative study by Kohlenbach and the author \cite{KohSip21} of the celebrated theorem of Reich \cite{Rei80} concerning strong convergence of approximating curves for resolvents of accretive operators. We stated it below in the metric manner which will be of concern to us, as it was for the first time explicitly formulated in \cite{PinSip}.

\begin{definition}[{\cite[Definition 2.6]{PinSip}}]
Let $\psi :(0, \infty) \times (0,\infty) \to (0,\infty)$. We say that a $W$-hyperbolic space $(X,d,W)$ {\bf has property $(G)$ with modulus $\psi$} if, for any $r$, $\eps >0$ and any $a$, $x$, $y \in X$ with $d(x,a) \leq r$, $d(y,a) \leq r$, $d(x,y) \geq \eps $, we have that
$$d^2\left(\frac{x+y}2,a\right) \leq \frac12 d^2(x,a) + \frac12 d^2(y,a) - \psi(r,\eps).$$
\end{definition}

For uniformly convex normed spaces, this property essentially goes back to Z\u alinescu \cite[Section 4]{Zal83} -- see also \cite{Xu,XuRoach} for more variations on this theme, and see \cite[Proposition 2.4]{KohSip21} (an instantiation of the more general \cite[Proposition 3.2]{BacKoh18}) for a formula for this modulus which is easily computable in terms of the modulus of uniform convexity.

\begin{proposition}
CAT(0) spaces have property $(G)$.
\end{proposition}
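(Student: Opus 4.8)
The plan is essentially to read off the conclusion directly from the defining inequality \eqref{cat0} of a CAT(0) space, which already has exactly the shape of the conclusion of property $(G)$ but carries the surplus term $-\frac14 d^2(x,y)$. So the whole content reduces to estimating this surplus term from below using the hypothesis $d(x,y) \geq \eps$.

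First I would fix arbitrary $r$, $\eps > 0$ and $a$, $x$, $y \in X$ satisfying $d(x,a) \leq r$, $d(y,a) \leq r$ and $d(x,y) \geq \eps$, and apply \eqref{cat0} to these three points to obtain $d^2\!\left(\frac{x+y}2,a\right) \leq \frac12 d^2(x,a) + \frac12 d^2(y,a) - \frac14 d^2(x,y)$. Then, since $d(x,y) \geq \eps > 0$, monotonicity of squaring yields $d^2(x,y) \geq \eps^2$, hence $-\frac14 d^2(x,y) \leq -\frac{\eps^2}4$. Substituting this bound into the previous inequality gives property $(G)$ with modulus $\psi(r,\eps) := \frac{\eps^2}4$.

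I expect no genuine obstacle here: CAT(0) spaces are singled out precisely because the parallelogram-type deficit $\frac14 d^2(x,y)$ is available uniformly, so the modulus can even be chosen independently of $r$. The only point worth flagging is that the radius bounds $d(x,a) \leq r$ and $d(y,a) \leq r$ play no role at all in this computation — in sharp contrast with the general $UCW$-hyperbolic case treated next, where the modulus will genuinely depend on $r$ and these bounds become essential.
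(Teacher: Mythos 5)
Your proof is correct and is essentially identical to the paper's: the paper likewise applies the defining CAT(0) inequality \eqref{cat0} and bounds $\frac14 d^2(x,y) \geq \frac{\eps^2}4$ using $d(x,y) \geq \eps$, obtaining the same $r$-independent modulus $\psi(r,\eps) = \frac{\eps^2}4$. Your observation that the radius bounds are not used here, in contrast to the general $UCW$-hyperbolic case of Theorem~\ref{mainthm}, is also accurate.
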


\begin{proof}
Let $X$ be a CAT(0) space and $r$, $\eps >0$, $a$, $x$, $y \in X$ with $d(x,a) \leq r$, $d(y,a) \leq r$, $d(x,y) \geq \eps $.
By \eqref{cat0}, we have that
$$d^2\left(\frac{x+y}2,a\right) \leq \frac12 d^2(x,a) + \frac12 d^2(y,a) - \frac14 d^2(x,y) \leq \frac12 d^2(x,a) + \frac12 d^2(y,a) -\frac{\eps^2}4,$$
and we are done.
\end{proof}

Recently, Pinto \cite{Pin24} extended the aforementioned analysis of Kohlenbach and the author of Reich's theorem to `uniformly smooth hyperbolic spaces' which are in addition $UCW$-hyperbolic. At the time of the writing of \cite{Pin24}, it was not known whether, generally, $UCW$-hyperbolic spaces satisfy property $(G)$; however, Pinto identified in \cite[Proposition 5.4]{Pin24} (see also \cite[Proposition 2.5]{Sip21}) a weaker property of them, called property $(M)$ in \cite[Definition 2.8]{PinSip}, as being enough for the proof to go through.

We now move on to showing that property $(G)$ does indeed hold in $UCW$-hyperbolic spaces, as well as justifying its study by giving applications which rely on it.

\section{Main result}\label{sec:main}

The following is the generalization to $UCW$-hyperbolic spaces of \cite[Proposition 2.4]{KohSip21}, which, as said earlier, is an instantiation of the more general \cite[Proposition 3.2]{BacKoh18}.

\begin{theorem}\label{mainthm}
Let $\eta :(0,\infty) \times (0,2] \to (0,\infty)$ be monotone and let $(X,d,W)$ be a $UCW$-hyperbolic space admitting $\eta$ as a modulus. For any suitable $r$, $\eps$, put
$$\psi_{\eta}(r,\eps):= \min \left( \frac{\left(\min\left(\frac\eps2, \frac{\eps^2}{96r}\eta^2\left(r,\min\left(\frac{\eps}{2r},2\right)\right)\right)\right)^2}4, \frac{\eps^2}{32}\eta^2\left(r,\min\left(\frac{\eps}{2r},2\right)\right) \right).$$
Then $(X,d,W)$ has property $(G)$ with modulus $\psi_\eta$.
\end{theorem}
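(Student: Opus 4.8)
The plan is to reduce the estimate to two elementary inequalities and then to split into cases according to how balanced the two distances $d(x,a)$ and $d(y,a)$ are. Write $s:=d(x,a)$, $t:=d(y,a)$, $m:=\frac{x+y}2$, and put $\rho:=\max(s,t)$, $\mu:=\min(s,t)$, so that $\{s,t\}=\{\rho,\mu\}$ and $\frac12 s^2+\frac12 t^2=\frac12\rho^2+\frac12\mu^2$. Since $\eps\le d(x,y)\le s+t\le 2\rho\le 2r$, one has $\frac\eps2\le\rho\le r$ and $\min\left(\frac{\eps}{2r},2\right)=\frac{\eps}{2r}$; abbreviate $e:=\eta\left(r,\frac{\eps}{2r}\right)$.

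First I would record the two inequalities that drive the argument. The first uses only $(W1)$, which gives $d(m,a)\le\frac{s+t}2$, together with the identity $\left(\frac{s+t}2\right)^2=\frac12 s^2+\frac12 t^2-\frac{(s-t)^2}4$, so that
$$d^2(m,a)\le\tfrac12 s^2+\tfrac12 t^2-\tfrac{(s-t)^2}4.$$
This already yields a convexity gap of $\frac{(s-t)^2}4$, which is large precisely when the distances are unbalanced. The second inequality comes from uniform convexity: since $d(x,a),d(y,a)\le\rho$ and $d(x,y)\ge\eps\ge\frac{\eps}{2r}\rho$, applying the modulus at radius $\rho$ with parameter $\frac{\eps}{2r}$ and then using monotonicity in the first argument (as $\rho\le r$) gives $d(m,a)\le(1-e)\rho$, where $e\le\eta\left(\rho,\frac{\eps}{2r}\right)\le1$ because $d(m,a)\ge0$. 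Squaring (legitimate as $e\le1$), expanding $(1-e)^2$, and using $e(2-e)\ge e$, this rearranges into
$$\tfrac12 s^2+\tfrac12 t^2-d^2(m,a)\ge e\rho^2-\tfrac12(\rho^2-\mu^2),$$
a bound that is useful precisely when the distances are balanced, since $\rho^2-\mu^2=|s-t|\,(s+t)$ is then small.

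With these in hand I would split on $\kappa:=\min\left(\frac\eps2,\frac{\eps^2 e^2}{96r}\right)$, which is exactly the inner minimum occurring in $\psi_\eta$. If $|s-t|\ge\kappa$, the first inequality gives a gap of at least $\frac{(s-t)^2}4\ge\frac{\kappa^2}4$, namely the first entry of the outer minimum. If instead $|s-t|\le\kappa$, I would insert $\rho\ge\frac\eps2$, $s+t\le 2r$, $\kappa\le\frac{\eps^2 e^2}{96r}$ and $e\le1$ into the second bound to get a gap of at least $\frac{e\eps^2}4-\frac{\eps^2 e^2}{96}\ge\frac{e^2\eps^2}4-\frac{\eps^2 e^2}{96}=\frac{23}{96}\eps^2 e^2\ge\frac{\eps^2 e^2}{32}$, namely the second entry. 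In either case the gap dominates $\psi_\eta$, as required by property $(G)$.

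The one genuine subtlety, and the step I would watch most carefully, concerns the second argument of the modulus. It is tempting to apply uniform convexity at radius $\rho$ with the sharp parameter $\frac{\eps}{\rho}$, but $\eta$ is assumed monotone only in its radius argument, so $\eta\left(r,\frac{\eps}{\rho}\right)$ cannot be compared with the value $\eta\left(r,\frac{\eps}{2r}\right)$ that appears in the target formula. The remedy is to deliberately weaken the parameter to $\frac{\eps}{2r}$, which is still admissible because $\frac{\eps}{2r}\rho\le\frac\eps2\le\eps\le d(x,y)$; monotonicity in the radius alone then transfers the bound from $\rho$ to $r$. Everything else is bookkeeping: the lower bound $\rho\ge\frac\eps2$ keeps the radius away from $0$, the inequality $e\le1$ makes squaring the convexity estimate valid, and the crude constant $96$ leaves ample room (the factor $\frac{23}{96}$ against the required $\frac1{32}$) to absorb the correction term $\frac12(\rho^2-\mu^2)$.
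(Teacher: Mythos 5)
Your proof is correct, and it takes a genuinely different route from the paper's. The paper argues by contradiction: from the failure of the estimate and $(W1)$ it first extracts $\beta-\alpha<\delta$ (your threshold $\kappa$ is exactly the paper's $\delta$, and your target $\frac{\eps^2}{32}e^2$ its $\gamma$), then \emph{equalizes the radii} by introducing the auxiliary point $z:=\left(1-\frac\alpha\beta\right)a+\frac\alpha\beta y$ with $d(z,a)=d(x,a)=\alpha$, controls $d\left(\frac{x+z}2,\frac{x+y}2\right)<\delta$ via axiom $(W4)$ (weakenable to $(W5)$, per the paper's remark), and applies the modulus to the pair $x$, $z$ at the common radius $\alpha>\frac\eps4$. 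You instead exploit the fact that the definition of the modulus only requires $d(x,a),d(y,a)\le\rho$ rather than equality, apply it directly to $x$, $y$ at the larger radius $\rho=\max(s,t)\ge\frac\eps2$, and absorb the imbalance term $\frac12(\rho^2-\mu^2)\le r\kappa$ through your explicit dichotomy on $|s-t|$ --- the same dichotomy the paper runs implicitly inside its contradiction. Your individual steps all check out: the weakening of the second argument to $\frac{\eps}{2r}$ (forced, as you correctly note, by $\eta$ being monotone only in the radius) is also what the paper does; $e\le 1$ is indeed forced by $d(m,a)\ge 0$ once the modulus is applied to actual points, which is exactly the situation in your Case 2, the only place you use it; and the arithmetic $\frac{23}{96}\ge\frac{3}{96}=\frac1{32}$ closes that case with the stated constant, so you recover precisely the paper's $\psi_\eta$. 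What your route buys: it is direct rather than by contradiction, needs no auxiliary point, and --- notably --- uses only $(W1)$ together with the modulus and its monotonicity, so it sharpens the paper's remark about replacing $(W4)$ by $(W5)$: your argument shows property $(G)$ holds with the same modulus in any Takahashi convex metric space (axiom $(W1)$ alone) admitting a monotone modulus of uniform convexity. What the paper's route buys is mainly structural fidelity: the radial normalization via $z$ mirrors the Banach-space arguments of Z\u alinescu and Ba\v{c}\'ak--Kohlenbach of which the theorem is the stated generalization.
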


\begin{proof}
Let $r$, $\eps>0$ and $a$, $x$, $y \in X$ with $d(x,a)\leq r$, $d(y,a) \leq r$ and $d(x,y) \geq \eps$. We want to show that
$$d^2\left(\frac{x+y}2,a\right) \leq \frac12 d^2(x,a) + \frac12 d^2(y,a) - \psi_\eta(r,\eps).$$
First of all, note that
$$\eps \leq d(x,y) \leq d(x,a) + d(y,a) \leq 2r,$$
i.e. we are only interested in cases where $\frac\eps{2r} \leq 2$, where
$$\min\left(\frac{\eps}{2r},2\right) = \frac{\eps}{2r}.$$

Put
$$\gamma := \frac{\eps^2}{32}\eta^2\left(r,\frac{\eps}{2r}\right), \quad \delta:=\min\left(\frac\eps2,\frac{\gamma}{3r}\right) = \min\left(\frac\eps2, \frac{\eps^2}{96r}\eta^2\left(r,\frac\eps{2r}\right)\right),$$
so that $\delta \leq \eps/2 \leq r$ and $\psi_\eta(r,\eps)=\min\left(\frac{\delta^2}4,\gamma\right)$. Also, put $\alpha:=d(x,a)$ and $\beta:=d(y,a)$ and assume w.l.o.g. that $\alpha \leq \beta$. Assume towards a contradiction that
$$d^2\left(\frac{x+y}2,a\right) > \frac12 d^2(x,a) + \frac12 d^2(y,a) - \psi_\eta(r,\eps).$$
Using $(W1)$, we have that
$$  \left(\frac{d(x,a)+d(y,a)}2\right)^2\geq d^2\left(\frac{x+y}2,a\right)> \frac12 d^2(x,a) + \frac12 d^2(y,a) - \frac{\delta^2}4,$$
from which we get that $\beta -\alpha < \delta$.

Put $z:=\left(1-\frac\alpha\beta\right)a + \frac\alpha\beta y$, so $d(z,a)=\alpha$ and $d(z,y)=\beta-\alpha < \delta$. Using $(W4)$, we have that
$$d\left(\frac{x+z}2,\frac{x+y}2\right) \leq \frac12d(z,y) \leq d(z,y)  <\delta,$$
so
$$d\left(\frac{x+y}2,a\right) \leq d\left(\frac{x+z}2,a\right) + d\left(\frac{x+z}2,\frac{x+y}2\right) < d\left(\frac{x+z}2,a\right) +\delta.$$
Now,
\begin{align*}
d^2(x,a) - \gamma &\leq d^2(x,a) - \psi_\eta(r, \eps)\\
 &\leq \frac12 d^2(x,a) + \frac12 d^2(y,a) - \psi_\eta(r,\eps) \\
&< d^2\left(\frac{x+y}2,a\right) \\
&< \left( d\left(\frac{x+z}2,a\right) + \delta \right)^2 \\
&= d^2\left(\frac{x+z}2,a\right) + 2d\left(\frac{x+z}2,a\right)\cdot \delta + \delta^2\\
&= d^2\left(\frac{x+z}2,a\right) + \delta\left(2d\left(\frac{x+z}2,a\right) +\delta \right)\\
&\leq d^2\left(\frac{x+z}2,a\right) + \delta\left(2r + r \right) \\
&\leq d^2\left(\frac{x+z}2,a\right) + \gamma,
\end{align*}
so
$$d^2(x,a) < d^2\left(\frac{x+z}2,a\right) + 2\gamma,$$
which we keep in mind, as this is what will get contradicted later. We also have that
$$\eps \leq d(x,y) \leq d(x,z) + d(z,y) < d(x,z) + \delta,$$
so $d(x,z) > \eps - \delta \geq \frac\eps2 \geq \frac\eps{2r} \cdot \alpha$. Since $d(x,a)=d(z,a)=\alpha$, using that $\eta$ is a monotone modulus of uniform convexity, we get that
$$d\left(\frac{x+z}2,a\right) \leq \left(1-\eta\left(\alpha,\frac\eps{2r}\right)\right) \alpha \leq \left(1-\eta\left(r,\frac\eps{2r}\right)\right)\alpha.$$
Since
$$\eps \leq d(x,y) \leq d(x,a) + d(y,a) = 2\alpha + (\beta-\alpha) < 2\alpha + \delta,$$
so
$$\alpha > \frac{\eps-\delta}2 \geq \frac\eps4,$$
we have that
$$d(x,a) - d\left(\frac{x+z}2,a\right) \geq \alpha - \left(1-\eta\left(r,\frac\eps{2r}\right)\right)\alpha = \alpha \eta\left(r,\frac\eps{2r}\right) \geq \frac\eps4 \eta\left(r,\frac\eps{2r}\right),$$
so 
$$d^2(x,a) \geq d^2\left(\frac{x+z}2,a\right) + \frac{\eps^2}{16}\eta^2\left(r,\frac{\eps}{2r}\right) = d^2\left(\frac{x+z}2,a\right) + 2\gamma,$$
a contradiction.
\end{proof}

\begin{remark}
An inspection of the proof shows that the only use of the axiom $(W4)$ may be replaced by that of the axiom $(W5)$, as introduced in \cite{KohNic}, which states that, for all $x$, $y$, $z \in X$ and $\lambda \in [0,1]$,
$$d(W(x,z,\lambda),W(y,z,\lambda))\leq d(x,y).$$
\end{remark}

With this result in mind, we may now show that the section of the square of the distance function is uniformly convex on bounded subsets.

\begin{corollary}
Let $\psi_\bullet$ be defined as in Theorem~\ref{mainthm}. Let $\eta :(0,\infty) \times (0,2] \to (0,\infty)$ be monotone and let $(X,d,W)$ be a $UCW$-hyperbolic space admitting $\eta$ as a modulus. Then, for any $\lambda \in [0,1]$, $r$, $\eps >0$ and any $a$, $x$, $y \in X$ with $d(x,a) \leq r$, $d(y,a) \leq r$, $d(x,y) \geq \eps $, we have that
\begin{align*}
d^2\left((1-\lambda)x+\lambda y,a\right) &\leq (1-\lambda) d^2(x,a) + \lambda d^2(y,a) - 2\min(\lambda, 1-\lambda)\psi_\eta(r,\eps)\\
&\leq (1-\lambda) d^2(x,a) + \lambda d^2(y,a) - 2\lambda(1-\lambda)\psi_\eta(r,\eps).
\end{align*}
\end{corollary}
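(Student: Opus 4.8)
The plan is to deduce this corollary from Theorem~\ref{mainthm}, which already handles the midpoint case $\lambda = \frac12$. The statement to prove is a general-$\lambda$ strong convexity inequality for $d^2(\cdot, a)$, so the natural strategy is to reduce an arbitrary convex combination to a midpoint. By symmetry (swapping the roles of $x$ and $y$ replaces $\lambda$ by $1-\lambda$), I may assume without loss of generality that $\lambda \leq \frac12$, so that $\min(\lambda, 1-\lambda) = \lambda$; the final inequality in the statement then follows from the first since $\lambda \leq 1 - \lambda$ forces $\min(\lambda,1-\lambda) = \lambda \geq \lambda(1-\lambda) \cdot \frac{1}{1-\lambda} \geq \lambda(1-\lambda)$ — more simply, $\min(\lambda,1-\lambda) \geq \lambda(1-\lambda)$ always holds, so the second bound is immediate once the first is established.

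The key geometric step is to write the point $(1-\lambda)x + \lambda y$ as a midpoint of $x$ and a suitable auxiliary point. First I would set $w := (1-2\lambda)x + 2\lambda y = W(x,y,2\lambda)$, which is well-defined since $\lambda \leq \frac12$ gives $2\lambda \in [0,1]$. Using axiom $(W2)$ one checks that $W(x,w,\tfrac12) = W(x,y,\lambda)$, i.e. $(1-\lambda)x + \lambda y = \frac{x+w}{2}$, because both points lie on the geodesic from $x$ to $y$ at the same parameter value. Then I would estimate $d(w,a)$ via Proposition~\ref{quad} or directly via $(W1)$: $d(w,a) \leq (1-2\lambda)d(x,a) + 2\lambda d(y,a) \leq r$, and similarly $d(x,a) \leq r$, so the pair $x, w$ sits in the ball of radius $r$ about $a$. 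I also need $d(x,w) \geq \eps$; since $d(x,w) = 2\lambda\, d(x,y)$ by $(W2)$, this is where a difficulty arises — the separation $d(x,w)$ can be much smaller than $\eps$ when $\lambda$ is small, so Theorem~\ref{mainthm} cannot be applied naively to $x$ and $w$.

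The main obstacle is therefore precisely this: applying the midpoint result $G$ to the squared distance requires a separation hypothesis on the two endpoints, but halving towards $x$ shrinks the separation by the factor $2\lambda$. The cleaner route, which I expect to be the intended one, is to avoid reducing to a midpoint entirely and instead prove the general-$\lambda$ inequality directly by combining Proposition~\ref{quad} with a single application of Theorem~\ref{mainthm} at the midpoint. Concretely, I would write $(1-\lambda)x+\lambda y$ for $\lambda \leq \tfrac12$ as a convex combination of $x$ and the midpoint $m := \frac{x+y}{2}$: one has $(1-\lambda)x + \lambda y = (1-2\lambda)x + 2\lambda\, m$, again by $(W2)$. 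Applying Proposition~\ref{quad} to this combination gives
\begin{align*}
d^2\big((1-\lambda)x+\lambda y, a\big) &\leq (1-2\lambda)\, d^2(x,a) + 2\lambda\, d^2(m,a).
\end{align*}
Now Theorem~\ref{mainthm} bounds $d^2(m,a) = d^2\big(\tfrac{x+y}{2},a\big) \leq \tfrac12 d^2(x,a) + \tfrac12 d^2(y,a) - \psi_\eta(r,\eps)$, using the genuine separation $d(x,y) \geq \eps$. Substituting and collecting terms yields
\begin{align*}
d^2\big((1-\lambda)x+\lambda y, a\big) &\leq (1-\lambda)\, d^2(x,a) + \lambda\, d^2(y,a) - 2\lambda\, \psi_\eta(r,\eps),
\end{align*}
which is exactly the first claimed bound with $\min(\lambda,1-\lambda) = \lambda$. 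The symmetric case $\lambda \geq \tfrac12$ is handled by interchanging $x$ and $y$, and the second inequality follows from $\min(\lambda,1-\lambda) \geq \lambda(1-\lambda)$. The only genuinely nontrivial input is Theorem~\ref{mainthm}; everything else is the geodesic identity from $(W2)$ and the elementary convexity of Proposition~\ref{quad}.
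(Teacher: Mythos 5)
Your overall strategy coincides with the paper's: reduce to $\lambda \leq \frac12$ via $(W3)$, express $(1-\lambda)x+\lambda y$ as the combination $W(x,m,2\lambda)$ with $m:=\frac{x+y}2$, then apply Proposition~\ref{quad} followed by Theorem~\ref{mainthm} at the midpoint, where the full separation $d(x,y)\geq\eps$ is available. You also correctly identify and discard the naive route through $w:=W(x,y,2\lambda)$, and the final step $\min(\lambda,1-\lambda)\geq\lambda(1-\lambda)$ is fine. However, there is one genuine gap, and it sits at the only nontrivial geometric step: the identity $(1-\lambda)x+\lambda y = W(x,m,2\lambda)$ does \emph{not} follow from $(W2)$, and your justification (``both points lie on the geodesic from $x$ to $y$ at the same parameter value'') assumes exactly what must be proved. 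Axiom $(W2)$ only controls distances between points of the form $W(u,v,\cdot)$ for a \emph{fixed} pair of endpoints $(u,v)$; it says nothing about compatibility between the segment $W(x,y,\cdot)$ and the separately invoked segment $W(x,m,\cdot)$. In a general $W$-hyperbolic space geodesics need not be unique, and the axioms do not force $W$ to be consistent across nested endpoint pairs, so there is no a priori reason that $W(x,m,2\lambda)$ lies on the segment $W(x,y,\cdot)$ at all.

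The paper closes precisely this gap by using the uniform convexity of the space a second time. Setting $z:=W(x,m,2\lambda)$, one computes from $(W2)$ (applied to the pair $(x,m)$) that $d(z,x)=2\lambda\,d(x,m)=\lambda\,d(x,y)$, and from the triangle inequality in both directions that $d(z,y)=(1-\lambda)d(x,y)$; then \cite[Proposition 5]{Leu07} --- the strict convexity of $UCW$-hyperbolic spaces, which guarantees that a point at these two prescribed distances from $x$ and $y$ must equal $W(x,y,\lambda)$ --- yields $z=W(x,y,\lambda)$. This is the missing ingredient in your write-off ``again by $(W2)$'': the identity is a $UCW$-specific fact, not a consequence of the $W$-axioms alone. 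Once this lemma is inserted, the remainder of your argument is correct and matches the paper's proof step for step.
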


\begin{proof}
Let $\lambda \in [0,1]$, $r$, $\eps >0$ and any $a$, $x$, $y \in X$ with $d(x,a) \leq r$, $d(y,a) \leq r$, $d(x,y) \geq \eps $. Assume w.l.o.g., using $(W3)$, that $\lambda \leq 1/2$, so $2\lambda \in [0,1]$.

Put $m:=\frac{x+y}2$. We first show that $W(x,y,\lambda)=W(x,m,2\lambda)$. Put $z:=W(x,m,2\lambda)$. We have that
$$d(z,x) = 2\lambda d(x,m) = \lambda d(x,y)$$
and
$$d(z,y) \leq d(z,m) + d(m,y) = \left(\frac12-\lambda\right)d(x,y) + \frac12d(x,y) = (1-\lambda)d(x,y),$$
but also that
$$d(z,y) \geq d(x,y) - d(z,x) = (1-\lambda)d(x,y).$$
Applying \cite[Proposition 5]{Leu07}, we obtain that $z=W(x,y,\lambda)$.

We thus get, using Proposition~\ref{quad} for the first inequality, that
\begin{align*}
d^2\left((1-\lambda)x+\lambda y,a\right) &\leq (1-2\lambda)d^2(x,a) + 2\lambda d^2(m,a) \\
&\leq (1-2\lambda)d^2(x,a) + 2\lambda\left(\frac12 d^2(x,a) + \frac12 d^2(y,a) - \psi_\eta(r,\eps)\right) \\
&= (1-\lambda) d^2(x,a) + \lambda d^2(y,a) - 2\lambda\psi_\eta(r,\eps)\\
&\leq (1-\lambda) d^2(x,a) + \lambda d^2(y,a) - 2\min(\lambda, 1-\lambda)\psi_\eta(r,\eps),
\end{align*}
and we are done.
\end{proof}

\section{Shadow sequences}\label{sec:shadow}

We will now produce some applications of our main result to the convergence of `shadow sequences', which, in this context (see, for example, \cite[p. 92]{BauCom17}), are projections of (quasi-)Fej\'er monotone sequences onto the set with respect to which they are (quasi-)Fej\'er monotone. Our study will be very much inspired by proof mining, and thus will be a quantitative one. One would expect that a quantitative version of a convergence theorem would exhibit a rate of convergence -- or, equivalently, of Cauchyness -- an indicator of how fast the sequence converges. It is known, however, that even simple situations like the elementary monotone convergence theorem for real sequences do not admit uniform rates of convergence. One is then driven to replace the Cauchyness property, say, of a sequence $(x_n)$ in a metric space $(X,d)$, by the next best thing, namely the classically equivalent, but constructively distinct property expressed by
$$\forall \eps> 0\ \forall g: \N \to \N\ \exists N \in \N\ \forall n,\ m \in [N, N+g(N)]\ d(x_n,x_m) \leq \eps,$$
which is notable for having been rediscovered in the 2000s by Terence Tao during his work in ergodic theory \cite{Tao08A}, and popularized through his blog \cite{Tao08}, where he treated the case of the monotone convergence theorem\footnote{I said `rediscovered', because even this monotone convergence case in itself had already been studied by Georg Kreisel in the 1950s, as it can be seen in the very paper \cite[pp. 49--50]{Kre} which introduced the program now known as proof mining.} and asked for suggestions for a proper name for the property. The name that ultimately stuck was {\bf metastability}, as suggested by Jennifer Chayes, and one may then speak of a `rate of metastability' as being an upper bound on the $N$ in the metastability statement, given in terms of the $\eps$ and the $g$ (and perhaps some other parameters of the problem at hand). Thus, we will state all our convergence results in terms of rates of metastability.

To express such rates, we introduce some notations. For all $f:\N \to \N$, we define $\wt{f} : \N \to \N$, for all $n$, by $\wt{f}(n):=n+f(n)$ and $f^M:\N\to\N$, for all $n$, by $f^M(n):=\max_{i \leq n} f(i)$; in addition, for all $n \in \N$, we denote by $f^{(n)}$ the $n$-fold composition of $f$ with itself.

\subsection{Sequences of reals}

We present below the now-standard quantitative, `metastable' version of the monotone convergence theorem, as pointed out by Tao \cite{Tao08} (see also \cite[pp. 30--32]{Koh08}).

\begin{proposition}\label{mmcp}
Let $b \geq 0$ and $(a_n)$ be a nonincreasing sequence in $[0,b]$. Let $\eps>0$ and $g:\N\to\N$. Then there is an $N \leq \wt{g}^{\left(\left\lceil\frac b\eps\right\rceil \right)}(0)$ such that, for all $n$, $m \in [N,N+g(N)]$, $|a_n-a_m|\leq\eps$.
\end{proposition}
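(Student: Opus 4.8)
The plan is to run the standard pigeonhole argument on the iterates of $\wt g$. The guiding intuition is that each ``failure'' of the sought property on a candidate interval forces the nonincreasing sequence to drop by strictly more than $\eps$, and since the sequence is trapped in $[0,b]$ it cannot sustain more than $\lceil b/\eps\rceil$ such drops.

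First I would set $N_k := \wt g^{(k)}(0)$ for each $k \in \N$ and observe that, since $\wt g(n) = n + g(n) \geq n$, the sequence $(N_k)$ is nondecreasing; in particular $N_k \leq \wt g^{\left(\left\lceil b/\eps\right\rceil\right)}(0)$ whenever $k \leq \lceil b/\eps\rceil$. I would also record the key identity $N_k + g(N_k) = \wt g(N_k) = N_{k+1}$, so that the candidate interval starting at $N_k$ is exactly $[N_k, N_{k+1}]$.

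Next I would argue by contradiction: suppose that for every $k \in \{0, 1, \dots, \lceil b/\eps\rceil - 1\}$ the property fails on $[N_k, N_{k+1}]$, i.e.\ there are $n \leq m$ in this interval with $|a_n - a_m| > \eps$. Since $(a_n)$ is nonincreasing, $N_k \le n \le m \le N_{k+1}$ yields $a_{N_k} - a_{N_{k+1}} \geq a_n - a_m > \eps$. Summing these $\lceil b/\eps\rceil$ strict inequalities telescopes to $a_{N_0} - a_{N_{\lceil b/\eps\rceil}} > \lceil b/\eps\rceil \cdot \eps \geq b$, which contradicts the fact that both $a_{N_0}$ and $a_{N_{\lceil b/\eps\rceil}}$ lie in $[0,b]$. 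Hence some index $k \leq \lceil b/\eps\rceil - 1$ must work, and then $N := N_k$ simultaneously satisfies the bound $N \le \wt g^{\left(\left\lceil b/\eps\right\rceil\right)}(0)$ and the required metastability estimate on $[N, N+g(N)]$.

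The argument is purely combinatorial, so there is no single hard analytic step; the only points demanding care are the bookkeeping for the telescoping sum and the degenerate boundary cases. In particular, when $b = 0$ one has $\lceil b/\eps\rceil = 0$, the counting becomes vacuous, and one simply takes $N = \wt g^{(0)}(0) = 0$ with the sequence being constantly $0$. I expect these edge cases to be the only mild obstacle, resolved by noting that the contradiction above is drawn from a genuinely nonempty collection of intervals precisely when $\lceil b/\eps\rceil \geq 1$.
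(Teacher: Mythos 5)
Your proposal is correct and is essentially the paper's own proof: both run the same pigeonhole/telescoping argument on the iterates $N_k = \wt{g}^{(k)}(0)$, using monotonicity of $(a_n)$ to convert a failure on $[N_k, N_{k+1}]$ into a drop $a_{N_k} - a_{N_{k+1}} > \eps$. The only (harmless) difference is bookkeeping: you derive the contradiction from $\left\lceil b/\eps\right\rceil$ failing intervals against $a_{N_0} - a_{N_{\lceil b/\eps\rceil}} \leq b$, while the paper sums over $\left\lceil b/\eps\right\rceil + 1$ intervals and compares with $a_0 \leq b$ -- both are valid, and your explicit treatment of the degenerate case $b=0$ is a welcome extra.
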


\begin{proof}
Assume towards a contradiction that the conclusion is false. In particular, for all $i \leq \left\lceil\frac b\eps\right\rceil$, $a_{\wt{g}^{(i)}(0)} - a_{\wt{g}^{(i+1)}(0)} > \eps$. Then
$$a_0 \geq a_0 - a_{\wt{g}^{\left(\left\lceil\frac b\eps\right\rceil + 1\right)}(0)} = \sum_{i=0}^{\left\lceil\frac b\eps\right\rceil} \left(a_{\wt{g}^{(i)}(0)} - a_{\wt{g}^{(i+1)}(0)} \right)> \left\lceil\frac b\eps\right\rceil\cdot\eps \geq b,$$
a contradiction.
\end{proof}

The following is a slight generalization of the monotone convergence theorem, which has proven to be quite useful in nonlinear analysis.

\begin{proposition}[{Tan and Xu, \cite[Lemma 1]{TanXu}}]
Let $(a_n)$ and $(\delta_n)$ be sequences of non-negative reals such that $\sum \delta_n < \infty$ and, for all $n$, $a_{n+1} \leq a_n + \delta_n$. Then $(a_n)$ is convergent.
\end{proposition}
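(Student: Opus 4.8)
The plan is to reduce this to the monotone convergence theorem (Proposition~\ref{mmcp} in its qualitative incarnation) by manufacturing an auxiliary sequence that absorbs the cumulative errors $\delta_n$ into a genuinely nonincreasing tail. Since $\sum \delta_n < \infty$, the tail sums $s_n := \sum_{k=n}^\infty \delta_k$ are well-defined non-negative reals, and as tails of a convergent series they satisfy $s_n \to 0$. The crux of the argument is then the choice of the corrected sequence
$$b_n := a_n + s_n,$$
which is designed precisely so that the single-step defect $a_{n+1} \leq a_n + \delta_n$ is cancelled by the drop $s_n - s_{n+1} = \delta_n$ in the tail.

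First I would verify that $(b_n)$ is nonincreasing: using the hypothesis together with $s_n = \delta_n + s_{n+1}$, one computes
$$b_{n+1} = a_{n+1} + s_{n+1} \leq a_n + \delta_n + s_{n+1} = a_n + s_n = b_n.$$
Since every $a_n$ and every $s_n$ is non-negative, $(b_n)$ is also bounded below by $0$. Hence $(b_n)$ is a nonincreasing sequence in some interval $[0,b]$ (with $b := b_0 = a_0 + s_0$) and therefore converges, say to $\ell$, by the monotone convergence theorem.

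Finally, since $a_n = b_n - s_n$ with $b_n \to \ell$ and $s_n \to 0$, I conclude that $(a_n)$ converges to $\ell$ as well. There is no genuine obstacle here: the entire content of the proof is the correct bookkeeping in the definition of $b_n$, everything else being routine. The one point worth flagging, given the proof-mining orientation of this section, is that the statement as given is purely qualitative (``$(a_n)$ is convergent''); were a metastable version desired, the same reduction would let one transport the rate of metastability of Proposition~\ref{mmcp} for $(b_n)$ over to $(a_n)$, at the cost of also tracking a rate at which $s_n \to 0$ (equivalently, a Cauchy rate for the partial sums of $\sum \delta_n$), but for the qualitative conclusion stated the classical argument above suffices.
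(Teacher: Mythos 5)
Your proof is correct, and it is essentially the classical argument of Tan and Xu themselves; the paper, by contrast, does not prove this qualitative statement at all (it is quoted with a citation) and instead proves a quantitative, metastable version of it right afterwards, namely Proposition~\ref{mmcp2}, by a genuinely different route. Your decomposition absorbs the errors into the tail sums $s_n := \sum_{k=n}^\infty \delta_k$, producing the nonincreasing sequence $b_n := a_n + s_n$ and reducing everything to the monotone convergence theorem; this is the shortest path to the qualitative conclusion, but the quantities $s_n$ are not effectively available — computing them amounts to knowing the limit of the partial sums of $\sum \delta_n$. The paper's proof of Proposition~\ref{mmcp2} avoids them entirely: it only uses a Cauchy rate $\gamma$ for the series (from which one gets $a_m \leq a_n + \eps/4$ for $\gamma(\eps/4) \leq n \leq m$, a weak surrogate for monotonicity) and then runs a direct counting/telescoping argument on the iterates $\wt{g^*}^{(i)}\bigl(\gamma\bigl(\frac\eps4\bigr)\bigr)$, which is why its rate of metastability depends only on $b$, $\gamma$, $\eps$ and $g$. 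Your closing remark is exactly right and worth making precise: the ``rate at which $s_n \to 0$'' that your reduction would need is precisely this modulus $\gamma$, since $n \geq \gamma(\eps)$ gives $s_n \leq \eps$ (pass to the limit in $\sum_{i \in [\gamma(\eps),m)} \delta_i \leq \eps$); one could then apply Proposition~\ref{mmcp} to $(b_n)$ with bound $a_0 + s_0$ and translate back via $|a_n - a_m| \leq |b_n - b_m| + s_{\min(n,m)}$, taking care to start the metastable search beyond $\gamma$ (as the paper's proof does by seeding its iteration at $\gamma(\eps/4)$) so that the tail term is small on the whole interval $[N, N+g(N)]$. Carried out carefully, this transport yields a bound of the same shape as the paper's, so the two approaches are quantitatively comparable; yours is more modular (monotone convergence as a black box), while the paper's direct argument is self-contained and makes the dependence on $\gamma$ explicit from the start.
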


We now present, for the first time, the full quantitative version of the above theorem (particular cases where the limit inferior of $(a_n)$ is known to be zero have been previously treated in \cite{Leu10, Leu14,Sip17}, see also the recent abstract version of those in \cite{FirLeuXX}).

\begin{proposition}\label{mmcp2}
Let $b \geq 0$ and $(a_n)$ be a nonincreasing sequence in $[0,b]$. Let $(\delta_n)$ be a sequence of non-negative reals such that, for all $n$, $a_{n+1} \leq a_n + \delta_n$. Let $\gamma : (0, \infty) \to \N$ be such that, for all $\eps >0$ and all $m \geq \gamma(\eps)$,
$$\sum_{i \in [\gamma(\eps),m)} \delta_i \leq \eps.$$

Let $\eps>0$ and $g:\N\to\N$. Then there is an
$$N \in \left[ \gamma\left(\frac\eps4\right), \wt{g^M}^{\left(\left\lceil\frac {2b}\eps\right\rceil \right)}\left(\gamma\left(\frac\eps4\right)\right) \right]$$
such that, for all $n$, $m \in [N,N+g(N)]$, $|a_n-a_m|\leq\eps$.
\end{proposition}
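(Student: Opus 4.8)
The idea is to combine the metastable monotone convergence theorem (Proposition~\ref{mmcp}) with the tail-control given by $\gamma$, by passing to a corrected auxiliary sequence which is genuinely nonincreasing. The obstacle is that $(a_n)$ itself is not monotone — it only satisfies $a_{n+1} \le a_n + \delta_n$ — so I cannot apply Proposition~\ref{mmcp} to it directly. The standard remedy is to define, for $n \ge \gamma(\eps/4)$, the sequence
$$
b_n := a_n + \sum_{i \in [\gamma(\eps/4),\, n)} \delta_i,
$$
which \emph{is} nonincreasing: indeed $b_{n+1} - b_n = a_{n+1} - a_n - \delta_n \le 0$ by hypothesis. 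Moreover, by the defining property of $\gamma$, for every $n \ge \gamma(\eps/4)$ the added tail satisfies $0 \le b_n - a_n = \sum_{i \in [\gamma(\eps/4),n)} \delta_i \le \eps/4$, so $b_n$ and $a_n$ differ by at most $\eps/4$.

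\textbf{Carrying it out.}
First I would fix $\eps > 0$ and $g : \N \to \N$ and set $M := \gamma(\eps/4)$. I would verify that $(b_n)_{n \ge M}$ lies in $[0, b + \eps/4]$ and is nonincreasing, as above. Rather than invoking Proposition~\ref{mmcp} verbatim (whose index set starts at $0$ and whose sequence lives in $[0,b]$), I would re-run its pigeonhole argument shifted to start at $M$, applied to $(b_n)$ with the counting function $g^M$ in place of $g$ and with threshold $\eps/2$ in place of $\eps$. The reason for using $g^M$ rather than $g$ is the familiar monotonicity trick: the point $N$ produced will satisfy $N \in [M,\, \wt{g^M}^{(\lceil 2b/\eps\rceil)}(M)]$, and since $g^M$ is nondecreasing and dominates $g$, the interval $[N, N+g(N)] \subseteq [N, N + g^M(N)]$ is controlled on the nose; this is what lets me bound the oscillation of $(b_n)$ on $[N, N+g(N)]$ by $\eps/2$. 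The iteration count $\lceil 2b/\eps\rceil$ comes from the sequence living in an interval of length $b + \eps/4 \le 2b$ (for the relevant range) divided by the threshold $\eps/2$.

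\textbf{Transferring back to $(a_n)$.}
Once I have $N$ with $|b_n - b_m| \le \eps/2$ for all $n, m \in [N, N+g(N)]$, I would pass back to $(a_n)$ via the triangle inequality:
$$
|a_n - a_m| \le |a_n - b_n| + |b_n - b_m| + |b_m - a_m| \le \frac\eps4 + \frac\eps2 + \frac\eps4 = \eps,
$$
using $|a_k - b_k| \le \eps/4$ for $k \ge M$ (which holds since $N \ge M$). This yields the desired oscillation bound, and the location of $N$ in the stated interval is exactly what the shifted pigeonhole produced. The main thing to be careful about is the bookkeeping: ensuring the $\lceil 2b/\eps\rceil$ iteration count genuinely covers the range of $(b_n)$, confirming that $g^M$ dominates $g$ pointwise so that the final interval is the correct one, and keeping the four $\eps/4$-and-$\eps/2$ error terms aligned so they sum to $\eps$. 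None of these is deep, but the monotone-majorant substitution $g \mapsto g^M$ is the one step where an inattentive argument would produce the wrong (non-closed-under-the-bound) interval, so I expect that to be the genuine, if minor, point of difficulty.
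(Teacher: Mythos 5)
Your proof is correct, but it takes a genuinely different route from the paper's. The paper works directly with the non-monotone sequence $(a_n)$: it first notes that $a_m \leq a_n + \eps/4$ whenever $m \geq n \geq \gamma(\eps/4)$, introduces the argmax counting function $g^*(n) := \argmax_{q \in [0,g(n)]} |a_n - a_{n+q}|$, and runs the pigeonhole along $N_i := \wt{g^*}^{(i)}(\gamma(\eps/4))$ -- the tail bound forces any window oscillation exceeding $\eps/2$ to be an actual drop $a_{N_i} - a_{N_{i+1}} > \eps/2$, and telescoping such drops against $a_{N_0} \leq b$ produces a good index $j$; it then separately proves $\wt{g^*}^{(i)}(n) \leq \wt{g^M}^{(i)}(n)$ by induction to place $N$ in the stated interval. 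You instead repair monotonicity once and for all via $b_n := a_n + \sum_{i \in [\gamma(\eps/4),n)} \delta_i$ and reduce to the monotone pigeonhole of Proposition~\ref{mmcp} (shifted to start at $\gamma(\eps/4)$, and run with $g^M$ so that both the containment $[N,N+g(N)] \subseteq [N,N+g^M(N)]$ and the final bound come out directly), transferring back by the $\eps/4 + \eps/2 + \eps/4$ triangle inequality. Your reduction buys conceptual economy -- no argmax function is needed, since the corrected sequence is genuinely nonincreasing and endpoint differences control window oscillations -- while the paper's argument avoids the auxiliary sequence and handles the raw sequence head-on; the two use the same error budget and yield identical bounds. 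One bookkeeping item in your write-up does need repair: the range $[0, b+\eps/4]$ you assign to $(b_n)$, against the threshold $\eps/2$, only justifies $\lceil (b+\eps/4)/(\eps/2) \rceil$ pigeonhole steps, and your fallback estimate $b+\eps/4 \leq 2b$ gives $\lceil 4b/\eps \rceil$ -- either of which can strictly exceed the stated $\lceil 2b/\eps \rceil$. The fix is immediate: since $(b_n)_{n \geq \gamma(\eps/4)}$ is nonincreasing with $b_{\gamma(\eps/4)} = a_{\gamma(\eps/4)} \leq b$ and $b_n \geq a_n \geq 0$, it actually lies in $[0,b]$, so the telescoped drops sum to at most $b$ and $\lceil 2b/\eps \rceil$ iterations suffice, exactly matching the statement.
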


\begin{proof}
Firstly, we have that, for all $n \geq \gamma\left(\frac\eps4\right)$ and all $m \geq n$,
$$a_m \leq a_n + \sum_{i \in [n,m)} \delta_i \leq a_n + \sum_{i \in \left[\gamma\left(\frac\eps4\right),m\right)} \delta_i \leq a_n + \frac\eps4.$$
Define, now, the function $g^*: \N \to \N$ by putting, for all $n$,
$$g^*(n):= \argmax_{q \in [0,g(n)]} |a_n - a_{n+q}|.$$
We have that, for all $n$, $g^*(n) \leq g(n) \leq g^M(n)$.

Put, for all $i \leq \left\lceil\frac {2b}\eps\right\rceil$, $N_i:=\wt{g^*}^{(i)}\left(\gamma\left(\frac\eps4\right)\right)$. Assume towards a contradiction that, for all $i \leq \left\lceil\frac {2b}\eps\right\rceil$, $|a_{N_i} - a_{N_{i+1}}| > \frac\eps2$. But, since, as we have shown earlier, for each $i$, $a_{N_{i+1}} \leq a_{N_i}+\frac\eps4$, we must have that, for each $i$, $a_{N_i} - a_{N_{i+1}} > \frac\eps2$, so
$$a_{N_0} \geq a_{N_0} - a_{N_{\left\lceil\frac {2b}\eps\right\rceil+1}} = \sum_{i=0}^{\left\lceil\frac {2b}\eps\right\rceil} (a_{N_i} - a_{N_{i+1}}) > \left(\left\lceil\frac {2b}\eps\right\rceil + 1 \right) \cdot \frac\eps2 > \left\lceil\frac {2b}\eps\right\rceil \cdot \frac\eps2 \geq b,$$
a contradiction. Thus, there is a $j \leq \left\lceil\frac {2b}\eps\right\rceil$ such that $|a_{N_j} - a_{N_{j+1}}| \leq \frac\eps2$.

We will take $N:=N_j$ and show that it satisfies the desired properties. Clearly, $N \geq \gamma\left(\frac\eps4\right)$ and, for all $n$, $m \in [N, N+g(N)]$,
$$|a_n - a_m| \leq |a_N - a_n| + |a_N - a_m| \leq 2|a_N - a_{N+g^*(N)}| = 2|a_{N_j} - a_{N_{j+1}}| \leq 2 \cdot \frac\eps2 = \eps.$$ 

It remains to be shown that
$$N \leq \wt{g^M}^{\left(\left\lceil\frac {2b}\eps\right\rceil \right)}\left(\gamma\left(\frac\eps4\right)\right).$$

We show that, for all $i$, $n \in \N$,
$$\wt{g^*}^{(i)}(n)\leq  \wt{g^M}^{(i)}(n),$$
from which it will follow that
$$N = N_j = \wt{g^*}^{(j)}\left(\gamma\left(\frac\eps4\right)\right) \leq \wt{g^M}^{(j)}\left(\gamma\left(\frac\eps4\right)\right) \leq \wt{g^M}^{\left(\left\lceil\frac {2b}\eps\right\rceil \right)}\left(\gamma\left(\frac\eps4\right)\right).$$

Let $n \in \N$. We prove the claim by induction on $i$. The base case is obvious. Assume, now, that the claim holds for an $i$ and we prove it for $i+1$. We have that
$$\wt{g^*}^{(i+1)}(n)=\wt{g^*}\left(\wt{g^*}^{(i)}(n)\right) \leq \wt{g^M}\left(\wt{g^*}^{(i)}(n)\right) \leq \wt{g^M}\left(\wt{g^M}^{(i)}(n)\right) = \wt{g^M}^{(i+1)}(n),$$
and we are done.
\end{proof}

\subsection{Fej\'er monotone sequences}

The following is the generalization from Hilbert spaces to complete $UCW$-hyperbolic spaces of \cite[Proposition 5.7]{BauCom17} (see also \cite[Lemma 3.2]{TakToy03}).

\begin{theorem}
Let $\psi_\bullet$ be defined as in Theorem~\ref{mainthm}. Let $\eta :(0,\infty) \times (0,2] \to (0,\infty)$ be monotone and let $(X,d,W)$ be a complete $UCW$-hyperbolic space admitting $\eta$ as a modulus. Let $S$ be a closed, convex, nonempty subset of $X$, so that the projection $P_S:X \to S$ is well-defined, and let $(x_n)$ be a sequence in $X$ which is Fej\'er monotone with respect to $S$. Let $b\geq 0$ be such that $d(x_0,P_Sx_0) \leq b$. Then, for all $\eps>0$ and $g:\N\to\N$, there is an $N \leq \wt{g}^{\left(\left\lceil\frac{b^2}{\psi_\eta(b,\eps)}\right\rceil \right)}(0)$ such that, for all $n$, $m \in [N,N+g(N)]$, $d(P_Sx_n,P_Sx_m)\leq\eps$.
\end{theorem}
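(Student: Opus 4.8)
The plan is to reduce the metastable convergence of the shadow sequence $(P_S x_n)$ to the metastable monotone convergence theorem, Proposition~\ref{mmcp}, applied to the real sequence $c_n := d^2(x_n, P_S x_n)$. First I would observe that $(c_n)$ is well-behaved: since $P_S x_n \in S$ and $(x_n)$ is Fej\'er monotone with respect to $S$, for every $n$ we have $d(x_{n+1}, P_S x_{n+1}) \leq d(x_{n+1}, P_S x_n) \leq d(x_n, P_S x_n)$, so $(c_n)$ is nonincreasing; and $c_0 = d^2(x_0, P_S x_0) \leq b^2$, whence $(c_n)$ is a nonincreasing sequence in $[0, b^2]$. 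By the same reasoning $d(x_m, P_S x_m) \leq d(x_m, P_S x_0) \leq d(x_0, P_S x_0) \leq b$ for all $m$.

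The heart of the argument is a decrease estimate: I claim that for $n \leq m$ with $d(P_S x_n, P_S x_m) \geq \eps$ one has $c_m \leq c_n - 2\psi_\eta(b,\eps)$. To prove it, fix such $n \leq m$ and set $a := x_m$, $p := P_S x_n$, $q := P_S x_m$. Both $p$ and $q$ lie at distance at most $b$ from $a$: indeed $d(q, a) = d(x_m, P_S x_m) \leq b$ as above, while $d(p, a) = d(x_m, P_S x_n) \leq d(x_n, P_S x_n) \leq b$ by Fej\'er monotonicity. Since $d(p, q) \geq \eps$, Theorem~\ref{mainthm} (property $(G)$ with radius $b$) yields
$$d^2\left(\frac{p+q}{2}, a\right) \leq \frac12 d^2(p, a) + \frac12 d^2(q, a) - \psi_\eta(b, \eps).$$
Now $\frac{p+q}{2} \in S$ by convexity of $S$, so minimality of the projection gives $d^2(a, q) = d^2(x_m, P_S x_m) \leq d^2\left(\frac{p+q}{2}, a\right)$; substituting and cancelling $\frac12 d^2(a,q)$ leaves $d^2(a, q) \leq d^2(p, a) - 2\psi_\eta(b,\eps)$. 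Finally $d^2(a, q) = c_m$ and, by Fej\'er monotonicity once more, $d^2(p, a) = d^2(x_m, P_S x_n) \leq d^2(x_n, P_S x_n) = c_n$, which proves the claim.

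With the estimate in hand, I would apply Proposition~\ref{mmcp} to $(c_n)$ with bound $b^2$ and tolerance $\psi_\eta(b,\eps)$, obtaining an $N \leq \wt{g}^{\left(\left\lceil b^2/\psi_\eta(b,\eps)\right\rceil\right)}(0)$ such that $|c_n - c_m| \leq \psi_\eta(b,\eps)$ for all $n, m \in [N, N+g(N)]$. For any such $n, m$, say $n \leq m$, monotonicity gives $c_n - c_m \leq \psi_\eta(b,\eps) < 2\psi_\eta(b,\eps)$; were $d(P_S x_n, P_S x_m) \geq \eps$, the decrease estimate would force $c_n - c_m \geq 2\psi_\eta(b,\eps)$, a contradiction. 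Hence $d(P_S x_n, P_S x_m) < \eps$, as desired.

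I expect the main obstacle to be the decrease estimate, and in particular the bookkeeping that lets property $(G)$ be applied with the single fixed radius $b$ rather than a varying $d(x_n, P_S x_n)$; working with the uniform radius $b$ is what makes the modulus $\psi_\eta(b,\eps)$ appear uniformly in $n$ and sidesteps any need for monotonicity of $\psi_\eta$ in its first argument. The remaining care is to match the counting in Proposition~\ref{mmcp} exactly: the relevant sequence ranges over $[0, b^2]$ (not $[0,b]$) and the tolerance must be chosen as $\psi_\eta(b,\eps)$, which together produce the stated bound $\wt{g}^{\left(\left\lceil b^2/\psi_\eta(b,\eps)\right\rceil\right)}(0)$.
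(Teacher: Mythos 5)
Your proof is correct and takes essentially the same route as the paper's: the identical decrease estimate obtained by applying Theorem~\ref{mainthm} at $a=x_m$ to the midpoint of $P_Sx_n$ and $P_Sx_m$ with the uniform radius $b$ (via Fej\'er monotonicity and the projection's minimality), followed by Proposition~\ref{mmcp} applied to the nonincreasing sequence $d^2(x_n,P_Sx_n)$ in $[0,b^2]$ with tolerance $\psi_\eta(b,\eps)$, yielding the same bound. The only (harmless) cosmetic difference is that you get the monotonicity of $d^2(x_n,P_Sx_n)$ directly from $d(x_{n+1},P_Sx_{n+1})\leq d(x_{n+1},P_Sx_n)$, whereas the paper argues it by splitting into the cases $P_Sx_n=P_Sx_{n+1}$ and $P_Sx_n\neq P_Sx_{n+1}$.
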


\begin{proof}
Firstly, we have that, for all $n \in \N$, using the projection property and then the Fej\'er monotonicity, that
$$d(P_Sx_n,x_n) \leq d(P_Sx_0,x_n) \leq d(P_Sx_0,x_0) \leq b.$$
Then, for all $n$, $m \in \N$ with $n < m$, we have, using the Fej\'er monotonicity, that
$$d(P_Sx_n,x_m) \leq d(P_Sx_n,x_n) \leq b.$$
We will show that, for all $\eps > 0$ and all $n$, $m \in \N$ such that $n < m$ and $d(P_Sx_n,P_Sx_m) \geq \eps$, we have that
$$\psi_\eta(b,\eps) \leq \frac12 (d^2(P_Sx_n,x_n) - d^2(P_Sx_m,x_m)).$$
Let $\eps$, $n$, $m$ be as stated. Using the projection property, the convexity of $S$ and Theorem~\ref{mainthm} (since $d(P_Sx_m,x_m) \leq b$ and $d(P_Sx_n,x_m) \leq b$), we have that
$$d^2(P_Sx_m,x_m) \leq d^2 \left(\frac{P_Sx_m+P_Sx_n}2, x_m\right) \leq \frac12 d^2(P_Sx_m,x_m) + \frac12 d^2(P_Sx_n,x_m) - \psi_\eta(b,\eps).$$
Using the (already established) fact that $d(P_Sx_n,x_m) \leq d(P_Sx_n,x_n)$, we get our conclusion.

We now show that the sequence $(d^2(P_Sx_n,x_n))$ is a nonincreasing sequence in $[0,b^2]$. Let $n \in \N$. If $P_Sx_n = P_Sx_{n+1}$, then, using the Fej\'er monotonicity, $d(P_Sx_{n+1},x_{n+1}) = d(P_Sx_n,x_{n+1}) \leq d(P_Sx_n,x_n)$. If $P_Sx_n \neq P_Sx_{n+1}$, then, by the previous property, we have that
$$0 < \psi_\eta(b,d(P_Sx_n,P_Sx_{n+1})) \leq \frac12 (d^2(P_Sx_n,x_n) - d^2(P_Sx_{n+1},x_{n+1}))$$
and we are done.

Let now $\eps>0$ and $g:\N \to \N$. By Proposition~\ref{mmcp}, there is an $N \leq \wt{g}^{\left(\left\lceil\frac {b^2}{\psi_\eta(b,\eps)}\right\rceil \right)}(0)$ such that for all $n$, $m \in [N,N+g(N)]$, $|d^2(P_Sx_n,x_n) - d^2(P_Sx_m,x_m)|\leq\psi_\eta(b,\eps)$.

This $N$ will be our desired $N$. Let now $n$, $m \in [N,N+g(N)]$. We want to show that $d(P_Sx_n,P_Sx_m)\leq\eps$. Assume w.l.o.g. that $n < m$ and assume towards a contradiction that $d(P_Sx_n,P_Sx_m) > \eps$. By the property shown before, we have that
$$0 < 2\psi_\eta(b,\eps) \leq d^2(P_Sx_n,x_n) - d^2(P_Sx_m,x_m) \leq  \psi_\eta(b,\eps),$$
a contradiction.
\end{proof}

\begin{corollary}
Let $(X,d,W)$ be a complete $UCW$-hyperbolic space. Let $S$ be a closed, convex, nonempty subset of $X$, so that the projection $P_S:X \to S$ is well-defined, and let $(x_n)$ be a sequence in $X$ which is Fej\'er monotone with respect to $S$. Then the sequence $(P_Sx_n)$ is convergent.
\end{corollary}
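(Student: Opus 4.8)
The plan is to extract the qualitative convergence statement directly from the quantitative metastable theorem proved immediately above, via the classical (and ineffective) equivalence between metastability and the Cauchy property, together with the completeness of $X$. Since the hard analytic work is already done in that theorem, the task here is essentially one of bookkeeping plus one logical step.

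First I would assemble the ingredients needed to apply the preceding theorem. Because $(X,d,W)$ is by hypothesis a complete $UCW$-hyperbolic space, it admits, by definition, a monotone modulus of uniform convexity $\eta$; and because $S$ is a closed, convex, nonempty subset of such a space, it is a Chebyshev set (as recalled in Section~\ref{sec:prelim}), so $P_S$ is well-defined. I would then fix a bound $b$ with $d(x_0,P_Sx_0) \leq b$, taking $b:=d(x_0,P_Sx_0)$. If this quantity vanishes, then $x_0 \in S$, and Fej\'er monotonicity applied with $q=x_0$ forces $d(x_n,x_0)\leq d(x_0,x_0)=0$, hence $x_n=x_0$ and $P_Sx_n=x_0$ for all $n$, so $(P_Sx_n)$ is constant and the claim is trivial; thus I may assume $b>0$, which guarantees $\psi_\eta(b,\eps)>0$ for every $\eps>0$.

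Next, feeding this $\eta$, $S$, $(x_n)$ and $b$ into the preceding theorem yields, for every $\eps>0$ and every $g:\N\to\N$, a natural number $N \leq \wt{g}^{\left(\left\lceil\frac{b^2}{\psi_\eta(b,\eps)}\right\rceil \right)}(0)$ such that $d(P_Sx_n,P_Sx_m)\leq\eps$ for all $n,m\in[N,N+g(N)]$. Discarding the explicit bound on $N$, this is exactly the assertion that $(P_Sx_n)$ satisfies the metastability property
$$\forall \eps>0\ \forall g:\N\to\N\ \exists N\in\N\ \forall n,m\in[N,N+g(N)]\ d(P_Sx_n,P_Sx_m)\leq\eps.$$

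Finally, I would invoke the standard equivalence of metastability with the Cauchy property. Arguing by contraposition: if $(P_Sx_n)$ were not Cauchy, there would be an $\eps>0$ such that for every $N$ one can choose $n_N,m_N\geq N$ with $d(P_Sx_{n_N},P_Sx_{m_N})>\eps$; setting $g(N):=\max(n_N,m_N)-N$ places such a witnessing pair inside $[N,N+g(N)]$ for every $N$, contradicting the metastability statement just obtained for this $\eps$ and $g$. Hence $(P_Sx_n)$ is Cauchy, and, $X$ being complete, it converges. The only genuinely non-routine point is this last passage, and it is delicate only in being ineffective: it is precisely here that the quantitative content of the theorem is traded away for a bare convergence assertion, in line with the fact that no uniform rate of convergence can be expected in general.
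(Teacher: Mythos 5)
Your proposal is correct and is exactly the intended derivation: the paper gives no separate proof for this corollary precisely because it follows from the preceding metastability theorem via the standard (ineffective) equivalence of metastability with the Cauchy property, plus completeness. Your handling of the degenerate case $b=0$ (needed since $\psi_\eta(b,\eps)$ is only defined for $b>0$) and the contraposition with $g(N):=\max(n_N,m_N)-N$ are both sound.
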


\begin{remark}
If $T$ is a nonexpansive self-mapping of a complete $UCW$-hyperbolic space $(X,d,W)$, then we might take $S:=Fix(T)$ if that set is nonempty (e.g. if $X$ is bounded and nonempty). A sequence $(x_n)$ is Fej\'er monotone with respect to this set if, in particular, it is assumed to be a {\bf Mann iteration} of $T$, that is, there is a sequence $(\alpha_n) \se [0,1]$ such that, for each $n$,
$$x_{n+1} = \alpha_n Tx_n + (1-\alpha_n) x_n.$$
\end{remark}

\subsection{Quasi-Fej\'er monotone sequences}

The following is the generalization from complete CAT(0) spaces to complete $UCW$-hyperbolic spaces of \cite[Lemma 5]{SaeYot20}.

\begin{theorem}
Let $\psi_\bullet$ be defined as in Theorem~\ref{mainthm}. Let $\eta :(0,\infty) \times (0,2] \to (0,\infty)$ be monotone and let $(X,d,W)$ be a complete $UCW$-hyperbolic space admitting $\eta$ as a modulus. Let $S$ be a closed, convex, nonempty subset of $X$, so that the projection $P_S:X \to S$ is well-defined, and let $(x_n)$ be a sequence in $X$ and $(\delta_n)$ be a sequence of non-negative reals such that, for all $q \in S$ and for all $n$, $d(x_{n+1},q) \leq d(x_n,q) + \delta_n$ (a property to which we will refer in the proof as {\it quasi-Fej\'er monotonicity}). Let $b$, $B >0$ be such that $d(x_0,P_Sx_0) \leq b$ and $\sum \delta_n \leq B$. Set $C:=2b+3B$. Let $\gamma : (0, \infty) \to \N$ be such that, for all $\eps >0$ and all $m \geq \gamma(\eps)$,
$$\sum_{i \in [\gamma(\eps),m)} \delta_i \leq \eps.$$
Then, for all $\eps>0$ and $g:\N\to\N$, there is an
$$N \leq \wt{g^M}^{\left(\left\lceil\frac {2C}{\psi_\eta(C,\eps)}\right\rceil \right)}\left(\gamma\left(\frac{\psi_\eta(C,\eps)}{4C}\right)\right)$$
such that, for all $n$, $m \in [N,N+g(N)]$, $d(P_Sx_n,P_Sx_m)\leq\eps$.
\end{theorem}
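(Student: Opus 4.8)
The plan is to run the same argument as in the Fej\'er monotone case, but with the metastable monotone convergence theorem (Proposition~\ref{mmcp}) replaced by its perturbed analogue (Proposition~\ref{mmcp2}), applied to the sequence $a_n := d^2(P_Sx_n,x_n)$. First I would record the basic distance estimates. By the projection property together with quasi-Fej\'er monotonicity (telescoped as $d(x_m,q)\le d(x_n,q)+\sum_{i\in[n,m)}\delta_i$ for $q\in S$ and $n<m$), one obtains $d(P_Sx_n,x_n)\le d(P_Sx_0,x_n)\le b+B$ and, for $n<m$, $d(P_Sx_n,x_m)\le d(P_Sx_n,x_n)+\sum_{i\in[n,m)}\delta_i$. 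Writing $\sigma_{n,m}:=\sum_{i\in[n,m)}\delta_i\le B$, all the distances $d(P_Sx_m,x_m)$ and $d(P_Sx_n,x_m)$ that will be fed into the modulus are bounded by $C=2b+3B$, which is precisely why this is the radius appearing in $\psi_\eta(C,\eps)$.

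Next I would establish the key descent inequality: for $n<m$ with $d(P_Sx_n,P_Sx_m)\ge\eps$,
$$2\psi_\eta(C,\eps)\le a_n - a_m + C\,\sigma_{n,m}.$$
This mirrors the Fej\'er case. Since $\frac{P_Sx_m+P_Sx_n}2\in S$ by convexity, the projection property and Theorem~\ref{mainthm} (applied with $a:=x_m$ and radius $C$) yield $\psi_\eta(C,\eps)\le\frac12\left(d^2(P_Sx_n,x_m)-d^2(P_Sx_m,x_m)\right)$. One then replaces $d(P_Sx_n,x_m)$ by the upper bound $d(P_Sx_n,x_n)+\sigma_{n,m}$, expands the square, and absorbs the cross term $2\sigma_{n,m}d(P_Sx_n,x_n)+\sigma_{n,m}^2$ into $C\,\sigma_{n,m}$ using $d(P_Sx_n,x_n)\le b+B$ and $\sigma_{n,m}^2\le B\sigma_{n,m}$.

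I would then verify that $(a_n)$ is a bounded, approximately nonincreasing sequence: the same projection-plus-telescoping computation at $m=n+1$, after squaring, gives $a_{n+1}\le a_n + C\delta_n$, with $\sum C\delta_n\le CB$. Hence $(a_n)$ satisfies the hypotheses of Proposition~\ref{mmcp2} with perturbations $C\delta_n$, for which a rate-of-summability function may be taken to be $\gamma'(\cdot):=\gamma(\cdot/C)$, since $\sum_{i\in[\gamma(\eps/C),m)}C\delta_i\le\eps$. Applying that proposition with threshold $\psi_\eta(C,\eps)$ produces, for each $g$, an index $N\ge\gamma'\!\left(\tfrac{\psi_\eta(C,\eps)}4\right)=\gamma\!\left(\tfrac{\psi_\eta(C,\eps)}{4C}\right)$, bounded above by the stated iterated expression, such that $|a_n-a_m|\le\psi_\eta(C,\eps)$ for all $n,m\in[N,N+g(N)]$.

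Finally I would close by contradiction. Given $n,m\in[N,N+g(N)]$, assume without loss of generality that $n<m$ and that $d(P_Sx_n,P_Sx_m)>\eps$. Because $N\ge\gamma\!\left(\tfrac{\psi_\eta(C,\eps)}{4C}\right)$, the defining property of $\gamma$ forces $C\,\sigma_{n,m}\le\tfrac{\psi_\eta(C,\eps)}4$, so the descent inequality gives $a_n-a_m\ge 2\psi_\eta(C,\eps)-\tfrac{\psi_\eta(C,\eps)}4>\psi_\eta(C,\eps)$, contradicting $|a_n-a_m|\le\psi_\eta(C,\eps)$. I expect the main obstacle to be the bookkeeping of the perturbation: the error $\sigma_{n,m}$ enters twice — once when turning $d(P_Sx_n,x_m)$ into $d(P_Sx_n,x_n)$ inside the descent inequality, and once through the non-monotonicity of $(a_n)$ — and both must be absorbed by the single factor $C$ and then dominated by a small fraction of the modulus via the choice of argument $\tfrac{\psi_\eta(C,\eps)}{4C}$ for $\gamma$. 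Arranging the constants so that the $\psi_\eta$-descent strictly beats the accumulated error is the delicate point.
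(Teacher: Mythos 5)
Your proposal is correct and follows essentially the same route as the paper's proof: the same basic estimates with $C=2b+3B$, the same descent inequality $2\psi_\eta(C,\eps)\le a_n-a_m+C\sigma_{n,m}$ obtained via the midpoint of the two projections and Theorem~\ref{mainthm}, the same approximate monotonicity $a_{n+1}\le a_n+C\delta_n$, and the same application of Proposition~\ref{mmcp2} with rate $\gamma(\cdot/C)$ and threshold $\psi_\eta(C,\eps)$, ending in the identical numerical contradiction. The only (cosmetic) difference is that you derive $a_{n+1}\le a_n+C\delta_n$ by a single direct squaring of the projection estimate, whereas the paper splits into the cases $P_Sx_n=P_Sx_{n+1}$ and $P_Sx_n\neq P_Sx_{n+1}$; your uniform computation is, if anything, slightly cleaner.
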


\begin{proof}
Firstly, we have that, for all $n \in \N$, using the projection property and then the quasi-Fej\'er monotonicity, that
$$d(P_Sx_n,x_n) \leq d(P_Sx_0,x_n) \leq d(P_Sx_0,x_0) +B \leq b + B \leq C.$$
Then, for all $n$, $m \in \N$ with $n < m$, we have, using the quasi-Fej\'er monotonicity, that
$$d(P_Sx_n,x_m)\leq d(P_Sx_n,x_n) +\sum_{i \in [n,m)} \delta_i \leq d(P_Sx_n,x_n) + B \leq b+2B \leq C$$
and, in addition, that
\begin{align*}
d^2(P_Sx_n,x_m) &\leq \left(d(P_Sx_n,x_n) +\sum_{i \in [n,m)} \delta_i\right)^2 \\
&= d^2(P_Sx_n,x_n) +2 \cdot d(P_Sx_n,x_n) \cdot\left(\sum_{i \in [n,m)} \delta_i \right)+ \left(\sum_{i \in [n,m)} \delta_i \right)^2 \\
&\leq d^2(P_Sx_n,x_n) + 2(b+B)\left(\sum_{i \in [n,m)} \delta_i \right) + B\left(\sum_{i \in [n,m)} \delta_i \right)\\
&= d^2(P_Sx_n,x_n) + C\left(\sum_{i \in [n,m)} \delta_i \right).
\end{align*}
We will show that, for all $\eps > 0$ and all $n$, $m \in \N$ such that $n < m$ and $d(P_Sx_n,P_Sx_m) \geq \eps$, we have that
$$\psi_\eta(C,\eps) \leq \frac12 \left(d^2(P_Sx_n,x_n) - d^2(P_Sx_m,x_m) + C\left(\sum_{i \in [n,m)} \delta_i \right)\right).$$
Let $\eps$, $n$, $m$ be as stated. Using the projection property, the convexity of $S$ and Theorem~\ref{mainthm} (since $d(P_Sx_m,x_m) \leq C$ and $d(P_Sx_n,x_m) \leq C$), we have that
$$d^2(P_Sx_m,x_m) \leq d^2 \left(\frac{P_Sx_m+P_Sx_n}2, x_m\right) \leq \frac12 d^2(P_Sx_m,x_m) + \frac12 d^2(P_Sx_n,x_m) - \psi_\eta(C,\eps).$$
Using the (already established) fact that
$$d^2(P_Sx_n,x_m) \leq d^2(P_Sx_n,x_n) + C\left(\sum_{i \in [n,m)} \delta_i \right),$$
we get our conclusion.

We now show that, for all $n$,
$$d^2(P_Sx_{n+1},x_{n+1}) \leq d^2(P_Sx_n,x_n) + C\delta_n.$$
Let $n \in \N$. If $P_Sx_n = P_Sx_{n+1}$, then $d^2(P_Sx_{n+1},x_{n+1}) = d^2(P_Sx_n,x_{n+1}) \leq d^2(P_Sx_n,x_n) + C\delta_n$. If $P_Sx_n \neq P_Sx_{n+1}$, then, by the previous property, we have that
$$0 < \psi_\eta(C,d(P_Sx_n,P_Sx_{n+1})) \leq \frac12 (d^2(P_Sx_n,x_n) - d^2(P_Sx_{n+1},x_{n+1}) + C\delta_n)$$
and we are done.

Let now $\eps>0$ and $g:\N \to \N$. By Proposition~\ref{mmcp2}, there is an
$$N \in \left[ \gamma\left(\frac{\psi_\eta(C,\eps)}{4C}\right), \wt{g^M}^{\left(\left\lceil\frac {2C}{\psi_\eta(C,\eps)}\right\rceil \right)}\left(\gamma\left(\frac{\psi_\eta(C,\eps)}{4C}\right)\right) \right]$$
such that for all $n$, $m \in [N,N+g(N)]$, $|d^2(P_Sx_n,x_n) - d^2(P_Sx_m,x_m)|\leq\psi_\eta(C,\eps)$.

This $N$ will be our desired $N$. Let now $n$, $m \in [N,N+g(N)]$. We want to show that $d(P_Sx_n,P_Sx_m)\leq\eps$. Assume w.l.o.g. that $n < m$ and assume towards a contradiction that $d(P_Sx_n,P_Sx_m) > \eps$. By the property shown before, we have that
$$2\psi_\eta(b,\eps) \leq d^2(P_Sx_n,x_n) - d^2(P_Sx_m,x_m)+ C\left(\sum_{i \in [n,m)} \delta_i \right) \leq  \psi_\eta(b,\eps)+ C\left(\sum_{i \in [n,m)} \delta_i \right).$$
Since 
$$n \geq N \geq \gamma\left(\frac{\psi_\eta(C,\eps)}{4C}\right),$$
we have that
$$\sum_{i \in [n,m)} \delta_i \leq \sum_{i \in \left[\gamma\left(\frac{\psi_\eta(C,\eps)}{4C}\right),m\right)} \delta_i \leq \frac{\psi_\eta(C,\eps)}{4C},$$
so
$$0 < 2\psi_\eta(C,\eps) \leq \psi_\eta(C,\eps)+ C \cdot \frac{\psi_\eta(C,\eps)}{4C} = \frac54\psi_\eta(C,\eps),$$
a contradiction.
\end{proof}

\begin{corollary}
Let $(X,d,W)$ be a complete $UCW$-hyperbolic space. Let $S$ be a closed, convex, nonempty subset of $X$, so that the projection $P_S:X \to S$ is well-defined, and let $(x_n)$ be a sequence in $X$ and $(\delta_n)$ be a sequence of non-negative reals such that $\sum \delta_n < \infty$ and, for all $q \in S$ and for all $n$, $d(x_{n+1},q) \leq d(x_n,q) + \delta_n$. Then the sequence $(P_Sx_n)$ is convergent.
\end{corollary}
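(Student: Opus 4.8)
The plan is to derive this qualitative convergence statement directly from the quantitative (metastable) theorem immediately above, together with the completeness of $X$. First I would manufacture data satisfying the hypotheses of that theorem. Since $\sum \delta_n < \infty$, the tail sums $\sum_{i \geq k} \delta_i$ tend to $0$, so setting $B := 1 + \sum_n \delta_n$ gives $B > 0$ with $\sum \delta_n \leq B$, while defining $\gamma(\eps) := \min\{ k \in \N : \sum_{i \geq k}\delta_i \leq \eps\}$ yields a well-defined $\gamma : (0,\infty) \to \N$ satisfying $\sum_{i \in [\gamma(\eps),m)} \delta_i \leq \sum_{i \geq \gamma(\eps)} \delta_i \leq \eps$ for every $m \geq \gamma(\eps)$. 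Taking $b := \max(d(x_0,P_Sx_0),1) > 0$, all the hypotheses of the preceding theorem are met, and $P_S$ is well-defined by \cite[Proposition 2.4]{Leu10} since $S$ is closed, convex and nonempty in a complete $UCW$-hyperbolic space.

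Applying that theorem, I obtain that for every $\eps > 0$ and every $g : \N \to \N$ there is an $N$ (bounded by the expression displayed there, though that bound is now irrelevant) such that $d(P_Sx_n, P_Sx_m) \leq \eps$ for all $n$, $m \in [N, N+g(N)]$; that is, the sequence $(P_Sx_n)$ is metastable in the sense recalled at the start of this section.

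It then remains to pass from metastability to genuine convergence, which is the conceptual crux and, being classically valid but non-effective, is precisely the reason the quantitative result above could only furnish a rate of metastability rather than a rate of convergence. I would argue that $(P_Sx_n)$ is Cauchy: were it not, there would be an $\eps > 0$ such that for every $N$ some pair of indices $\geq N$ witnesses distance $> \eps$; choosing, for each $N$, a value $g(N)$ large enough that such a witnessing pair lies inside $[N, N+g(N)]$ produces a function $g : \N \to \N$ for which the metastability statement at this $\eps$ and this $g$ fails, a contradiction. Hence $(P_Sx_n)$ is Cauchy, and since $X$ is complete it converges. The only genuine care needed is in the classical selection of the function $g$ witnessing the failure of Cauchyness; everything else is a direct instantiation of the theorem. (Alternatively, one could avoid invoking metastability and instead combine the inequality $d^2(P_Sx_{n+1},x_{n+1}) \leq d^2(P_Sx_n,x_n) + C\delta_n$ from the theorem's proof with the Tan--Xu proposition to see that $(d^2(P_Sx_n,x_n))$ converges, then feed this into the key inequality $\psi_\eta(C,\eps) \leq \tfrac12(d^2(P_Sx_n,x_n) - d^2(P_Sx_m,x_m) + C\sum_{i\in[n,m)}\delta_i)$ to force Cauchyness; but the metastability route is the cleaner one and is in keeping with the paper's emphasis.)
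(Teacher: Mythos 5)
Your proposal is correct and is exactly the intended derivation: the paper states this corollary without proof as an immediate consequence of the preceding quantitative theorem, relying on the classical equivalence (recalled at the start of Section~4) between metastability for all $\eps$ and $g$ and Cauchyness, plus completeness. Your instantiation of $b$, $B$ and $\gamma$, and your classical choice of $g$ witnessing a failure of Cauchyness, are precisely the routine steps the paper leaves implicit.
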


\begin{remark}
If $(\delta_n)$ is a sequence of non-negative reals such that $\delta_n \to 0$ and $T$ is a self-mapping of a complete $UCW$-hyperbolic space $(X,d,W)$ which is asymptotically nonexpansive w.r.t. $(\delta_n)$, then we might take $S:=Fix(T)$ if that set is nonempty (e.g. if $X$ is bounded and nonempty). A sequence $(x_n)$ is quasi-Fej\'er monotone with respect to this set if, in particular, $\sum \delta_n < \infty$ and the sequence is assumed to be {\bf bounded} and to be a {\bf Schu iteration} of $T$, that is, there is a sequence $(\alpha_n) \se [0,1]$ such that, for each $n$,
$$x_{n+1} = \alpha_n T^n x_n + (1-\alpha_n) x_n.$$
\end{remark}

\section{Proximal mappings}\label{sec:proximal}

For this section, we will fix a complete $UCW$-hyperbolic space $(X,d,W)$ and a monotone modulus $\eta$ of it. The following notions are standard in convex optimization.

\begin{definition}
Let $f : X \to \R \cup \{\infty\}$. We say that $f$ is:
\begin{itemize}
\item {\bf proper} if there is an $x \in X$ with $f(x) \neq \infty$;
\item {\bf convex} if, for all $x$, $y \in X$ and all $\lambda \in [0,1]$, $f((1-\lambda)x+\lambda y) \leq (1-\lambda)f(x) + \lambda f(y)$;
\item {\bf lower semicontinuous (lsc)} if, for all $x \in X$, $r \in \R$ with $r<f(x)$ there is a $\delta >0$ such that, for all $z \in X$ with $d(x,z) \leq \delta$, $r<f(z)$.
\end{itemize}
For any $\lambda > 0$, we define the function $\lambda f : X \to \R \cup \{\infty\}$ by putting, for any $x \in X$, $(\lambda f)(x):=\lambda \cdot f(x)$, and it is clear that $\lambda f$ is proper, convex and/or lsc iff $f$ is so.
\end{definition}

\begin{proposition}[{\cite[Proposition 2.2]{KohLeu10}}]\label{kohleus}
Let $(C_n)_{n \in \N}$ be a non-increasing sequence of bounded, closed, convex, nonempty subsets of $X$. Then $\bigcap_{n \in \N} C_n \neq \emptyset$.
\end{proposition}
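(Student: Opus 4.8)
The plan is to reduce the statement to a Cauchy argument for a sequence of metric projections, powered by the uniform convexity of the squared distance established in Theorem~\ref{mainthm}. First I would fix a basepoint $a \in C_0$ and, using that each $C_n$ is a closed, convex, nonempty subset of a complete $UCW$-hyperbolic space and hence a Chebyshev set (by \cite[Proposition 2.4]{Leu10}), define $x_n := P_{C_n}(a)$ and $r_n := d(a,x_n) = d(a,C_n)$. Since the $C_n$ are non-increasing, taking the nearest point over a smaller set can only increase the distance to $a$, so $(r_n)$ is nondecreasing; since every $C_n$ sits inside the bounded set $C_0$, the sequence $(r_n)$ is bounded, whence $(r_n^2)$ converges and is in particular Cauchy.

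The heart of the proof is to upgrade this to Cauchyness of $(x_n)$. Fix a bound $R$ with $r_n \le R$ for all $n$ (e.g.\ $R$ equal to the diameter of $C_0$). For $m > n$ we have $x_m \in C_m \subseteq C_n$, and by convexity of $C_n$ the midpoint $\frac{x_n+x_m}2$ also lies in $C_n$; since $x_n$ realizes the distance from $a$ to $C_n$, this forces $d\!\left(\frac{x_n+x_m}2, a\right) \ge r_n$. On the other hand, whenever $d(x_n,x_m) \ge \eps$, property $(G)$ with modulus $\psi_\eta$ (Theorem~\ref{mainthm}), applied with radius $R$ to the points $x_n$, $x_m$, $a$, yields $d^2\!\left(\frac{x_n+x_m}2, a\right) \le \frac12 r_n^2 + \frac12 r_m^2 - \psi_\eta(R,\eps)$. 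Combining the two inequalities gives $\psi_\eta(R,\eps) \le \frac12\!\left(r_m^2 - r_n^2\right)$.

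From here the conclusion is routine. Given $\eps > 0$, the Cauchyness of $(r_n^2)$ provides an $N$ with $r_m^2 - r_n^2 < 2\psi_\eta(R,\eps)$ for all $m > n \ge N$, which by the contrapositive of the displayed estimate forces $d(x_n,x_m) < \eps$; hence $(x_n)$ is Cauchy. By completeness $x_n \to x^\ast$ for some $x^\ast \in X$, and for each fixed $k$ the tail $(x_n)_{n \ge k}$ lies in the closed set $C_k$, so $x^\ast \in C_k$; as $k$ was arbitrary, $x^\ast \in \bigcap_{n \in \N} C_n$, which is therefore nonempty.

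I expect the main obstacle to be the middle step, and specifically the geometric observation that the midpoint $\frac{x_n+x_m}2$ belongs to $C_n$ and so cannot be closer to $a$ than the projection $x_n$. This is exactly what lets property $(G)$ bite: it converts the convexity gap $\psi_\eta(R,\eps)$ into an upper bound on the increment $r_m^2 - r_n^2$, and since that increment vanishes along the convergent sequence $(r_n^2)$, the separation $d(x_n,x_m)$ must vanish as well. One should also dispose of the trivial case $R = 0$, where all $x_n = a$ and $a \in \bigcap_{n \in \N} C_n$ immediately.
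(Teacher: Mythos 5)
Your proof is correct, but be aware that the paper itself contains no proof of this proposition: it is imported as-is from \cite[Proposition 2.2]{KohLeu10}, so the relevant comparison is with the argument in that source, which runs along classical lines working directly with the metric modulus $\eta$ at the level of first powers of distances --- one fixes $a \in X$, sets $r_n := d(a,C_n)$, takes approximate nearest points $y_n \in C_n$ with $d(a,y_n) \leq r_n + \frac1{n+1}$, and shows $(y_n)$ is Cauchy because the midpoint of two $\eps$-separated points at distance roughly $r := \lim r_n$ from $a$ would, by uniform convexity, come closer to $a$ than $d(a,C_n)$ permits. That route needs neither property $(G)$ nor exact projections (so the Chebyshev property of \cite[Proposition 2.4]{Leu10} is also dispensable; its own proof is a minimizing-sequence argument of the same kind). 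Your route instead squares the distances and invokes Theorem~\ref{mainthm}, which buys a cleaner bookkeeping step: the nonnegative increment $r_m^2 - r_n^2$ of the monotone bounded sequence $(r_n^2)$ absorbs the convexity gap via the key estimate $\psi_\eta(R,\eps) \leq \frac12\left(r_m^2 - r_n^2\right)$, exactly the mechanism used in the shadow-sequence theorems of Section~\ref{sec:shadow}, which your argument closely mirrors. There is no circularity in doing this --- Theorem~\ref{mainthm} and \cite[Proposition 2.4]{Leu10} are established independently of this proposition, which within the paper is used only downstream, in Section~\ref{sec:proximal} --- but your proof is strictly heavier than necessary, since the proposition predates property $(G)$ and holds by the elementary first-power argument; historically the implication runs in the direction you are reversing. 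The individual steps all check out: $\frac{x_n+x_m}2 \in C_n$ together with the projection property gives $d\left(\frac{x_n+x_m}2,a\right) \geq r_n$, property $(G)$ applies since $r_n, r_m \leq R$, monotone convergence of $(r_n^2)$ yields Cauchyness of $(x_n)$ by contraposition, and the degenerate case $R=0$ is correctly set aside (alternatively, take $R := \operatorname{diam}(C_0)+1 > 0$ and avoid the case split altogether).
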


\begin{proposition}\label{bac1}
Let $f : X \to \R \cup \{\infty\}$ be a convex, lsc function. Then $f$ is bounded from below on bounded sets.
\end{proposition}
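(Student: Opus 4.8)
The plan is to reduce the statement to closed balls and then to exploit the completeness of $X$ through the intersection property recorded in Proposition~\ref{kohleus}. Since every bounded subset of $X$ is contained in some closed ball, it suffices to prove that $f$ is bounded from below on each closed ball $B:=\{x \in X : d(x,p) \le r\}$. So I would assume towards a contradiction that $f$ is not bounded from below on some such ball $B$, i.e. that $\inf_{x \in B} f(x) = -\infty$, and for each $n \in \N$ I would set
$$C_n := \{x \in B : f(x) \le -n\}.$$

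The core of the argument is to check that each $C_n$ satisfies the four hypotheses of Proposition~\ref{kohleus}. Boundedness is immediate from $C_n \se B$. Nonemptiness is exactly the contradiction hypothesis: $\inf_B f = -\infty$ forces, for every $n$, the existence of a point of $B$ at which $f \le -n$. For convexity I would combine the convexity of the sublevel set $\{f \le -n\}$ (from the convexity of $f$) with the convexity of $B$, the latter being a direct consequence of axiom $(W1)$, since $d(x,p) \le r$ and $d(y,p) \le r$ give $d(W(x,y,\lambda),p) \le (1-\lambda)r + \lambda r = r$. For closedness I would use lower semicontinuity to see that $\{f \le -n\}$ is closed: if $f(x) > -n$, then applying the lsc hypothesis with the real $-n < f(x)$ produces a $\delta>0$ on whose ball $f$ stays above $-n$, so the complement of the sublevel set is open; intersecting with the closed ball $B$ keeps $C_n$ closed.

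Since the sequence $(C_n)$ is visibly non-increasing, Proposition~\ref{kohleus} then yields a point $x^* \in \bigcap_{n \in \N} C_n$. But $x^* \in C_n$ for every $n$ means $f(x^*) \le -n$ for all $n$, which forces $f(x^*) = -\infty$, contradicting that $f$ takes values in $\R \cup \{\infty\}$. This completes the reduction. The only substantive step — and the sole place where completeness of $X$ is used — is the invocation of Proposition~\ref{kohleus}; the main thing to get right is therefore the verification that the truncated sublevel sets $C_n$ are simultaneously bounded, closed, convex, and nonempty, after which the nested-intersection principle does all the work. The remaining verifications are routine reformulations of the hypotheses on $f$ together with axiom $(W1)$.
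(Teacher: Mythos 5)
Your proof is correct and is essentially the paper's own argument: the paper simply defers to \cite[Lemma 2.2.12]{Bac14} with Proposition~\ref{kohleus} substituted for the Hadamard-space intersection property, and that argument is precisely your construction of the truncated sublevel sets $C_n=\{x\in B : f(x)\le -n\}$ fed into the nested-intersection principle. Your supporting verifications (boundedness from $C_n\se B$, closedness via the paper's lsc definition, convexity from the convexity of $f$ together with $(W1)$, and nonemptiness from the contradiction hypothesis) are all sound.
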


\begin{proof}
The proof follows the same lines as that of \cite[Lemma 2.2.12]{Bac14}, using Proposition~\ref{kohleus} instead of \cite[Proposition 2.1.14]{Bac14}.
\end{proof}

\begin{proposition}\label{bac2}
Let $f : X \to \R \cup \{\infty\}$ be a convex, lsc function and $a \in X$. Then there is a $k \in \N$ such that, for any $x \in X$,
$$f(x) \geq -k(d(x,a) +1).$$
\end{proposition}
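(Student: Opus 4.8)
The plan is to construct an explicit affine minorant by feeding the lower bound on bounded sets from Proposition~\ref{bac1} into the convexity of $f$ along geodesics. First, I may assume $f$ is proper: if $f \equiv \infty$, then $f(x) = \infty \geq -k(d(x,a)+1)$ for every $k$, so the statement is trivial. Likewise, for a fixed $x$ it suffices to treat the case where $f(x)$ is finite, since otherwise the desired inequality holds vacuously. So fix a point $p \in X$ with $c := f(p) < \infty$ and set $\rho := d(p,a)$. Applying Proposition~\ref{bac1} to the bounded set $\{z \in X : d(z,p) \leq 1\}$, I fix some $M \geq 0$ with $f(z) \geq -M$ whenever $d(z,p) \leq 1$.

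Next I would split on the position of an arbitrary $x \in X$ (with $f(x)$ finite) relative to $p$. If $d(p,x) \leq 1$, then directly $f(x) \geq -M$. If $s := d(p,x) > 1$, I would pass to the point $y := W(p,x,1/s)$, which by $(W2)$ satisfies $d(p,y) = 1$, hence $f(y) \geq -M$. Convexity of $f$ applied to $y = W(p,x,1/s)$ then gives
\[
-M \leq f(y) \leq \left(1-\tfrac{1}{s}\right)f(p) + \tfrac{1}{s}f(x) = \left(1-\tfrac{1}{s}\right)c + \tfrac{1}{s}f(x),
\]
and, since $f(x)$ is finite, solving for $f(x)$ yields $f(x) \geq -Ms - (s-1)c \geq -(M+|c|)s - |c|$, where the last inequality just discards favourable sign contributions.

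Finally I would convert this bound in terms of $d(p,x)$ into one in terms of $d(x,a)$ via the triangle inequality $s = d(p,x) \leq \rho + d(x,a)$. In the case $s > 1$ this produces $f(x) \geq -(M+|c|)(d(x,a)+\rho) - |c|$, while in the case $s \leq 1$ the crude bound $f(x) \geq -M$ suffices because $d(x,a)+1 \geq 1$. Setting
\[
k := \left\lceil (M+|c|)(\rho+1) + |c| \right\rceil,
\]
which dominates both $M$ and $(M+|c|)\rho + |c|$ and satisfies $k \geq M+|c|$, then gives $f(x) \geq -k(d(x,a)+1)$ in all cases, using $d(x,a) \geq 0$.

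The only genuine subtlety—and the reason one cannot simply run the convexity argument straight through the prescribed point $a$, as one is tempted to do in a normed space—is that $f(a)$ may equal $\infty$, which would render the convexity inequality useless. I circumvent this by anchoring both the ball estimate and the geodesic at a point $p$ of the (convex) domain of $f$, and transferring to $a$ only at the very end through the triangle inequality. Everything else is routine bookkeeping of the constants and the care needed to keep track of the signs of $c$ and $M + c$.
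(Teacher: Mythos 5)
Your proof is correct and follows essentially the same route as the paper, whose proof simply defers to the argument of \cite[Lemma 2.2.13]{Bac14} with Proposition~\ref{bac1} in place of \cite[Lemma 2.2.12]{Bac14}: anchor at a point $p$ of the domain (avoiding the possibility $f(a)=\infty$, which you rightly flag), lower-bound $f$ on the unit ball around $p$ via Proposition~\ref{bac1}, propagate that bound outward by convexity along the geodesic $W(p,x,\cdot)$ evaluated at distance $1$ from $p$, and transfer to $a$ by the triangle inequality. Your rendering merely makes the constant $k$ explicit, which is in keeping with the paper's quantitative spirit.
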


\begin{proof}
The proof follows the same lines as that of \cite[Lemma 2.2.13]{Bac14}, using Proposition~\ref{bac1} instead of \cite[Lemma 2.2.12]{Bac14}.
\end{proof}

\begin{proposition}
Let $f : X \to \R \cup \{\infty\}$ be a proper, convex, lsc function and $a \in X$. Define $h: X \to \R \cup \{\infty\}$ by setting, for any $x \in X$,
$$h(x):= f(x) + \frac12 d^2(x,a).$$
Then $h$ has a unique minimizer.
\end{proposition}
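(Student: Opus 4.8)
The plan is to prove existence and uniqueness of the minimizer separately: existence will rest on the nested‑intersection Proposition~\ref{kohleus}, and uniqueness on the uniform convexity of the squared distance supplied by Theorem~\ref{mainthm}.

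For existence, I would first record the structural properties of $h$. It is proper (since $f$ is proper and $\frac12 d^2(\cdot,a)$ is finite everywhere), convex (a sum of the convex $f$ and the convex function $\frac12 d^2(\cdot,a)$, convexity of the latter being Proposition~\ref{quad}), and lower semicontinuous (a sum of the lsc $f$ and the continuous $\frac12 d^2(\cdot,a)$). The decisive ingredient is coercivity: Proposition~\ref{bac2} yields a $k \in \N$ with $f(x) \geq -k(d(x,a)+1)$, so that
$$h(x) \geq \tfrac12 d^2(x,a) - k\,d(x,a) - k.$$
Hence $h$ is bounded below, and $m := \inf_X h$ is finite (it is also $<\infty$ because $h$ is proper). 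Moreover the sublevel sets $C_c := \{x \in X : h(x) \leq c\}$ are bounded by the same estimate, closed by lower semicontinuity, and convex by convexity of $h$. Choosing a sequence $c_n \downarrow m$ with each $c_n > m$, the sets $C_{c_n}$ are nonempty (as $c_n$ exceeds the infimum), bounded, closed, convex and non‑increasing, so Proposition~\ref{kohleus} produces a point $x^\ast \in \bigcap_n C_{c_n}$. Since $h(x^\ast) \leq c_n$ for every $n$, we get $h(x^\ast) = m$, i.e.\ $x^\ast$ is a minimizer.

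For uniqueness, suppose $x_1 \neq x_2$ both attain the value $m$. Set $\eps := d(x_1,x_2) > 0$ and $r := \max(d(x_1,a),d(x_2,a)) > 0$. Combining the convexity of $f$ with Theorem~\ref{mainthm} (i.e.\ property $(G)$ applied at the midpoint, where $d(x_1,a)\leq r$, $d(x_2,a)\leq r$ and $d(x_1,x_2)\geq \eps$), I would estimate
$$h\!\left(\frac{x_1+x_2}2\right) \leq \frac12 f(x_1) + \frac12 f(x_2) + \frac12\!\left(\frac12 d^2(x_1,a) + \frac12 d^2(x_2,a) - \psi_\eta(r,\eps)\right) = m - \frac12 \psi_\eta(r,\eps),$$
which is strictly smaller than $m$ because $\psi_\eta(r,\eps) > 0$. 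This contradicts $m = \inf_X h$, so the minimizer is unique.

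I expect the existence step to be the main obstacle, specifically the verification that the chosen sublevel sets satisfy all hypotheses of Proposition~\ref{kohleus}; boundedness is the delicate point, and it is precisely the coercivity estimate coming from Proposition~\ref{bac2} that secures it. Uniqueness, by contrast, is a short and direct consequence of the quantitative convexity of the squared distance established as the paper's main result.
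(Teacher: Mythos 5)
Your proposal is correct and follows essentially the same route as the paper: coercivity of $h$ via Proposition~\ref{bac2} together with Proposition~\ref{kohleus} applied to a shrinking family of sublevel sets $\{x : h(x) \leq m + \tfrac1{n+1}\}$ for existence, and property $(G)$ from Theorem~\ref{mainthm} evaluated at the midpoint for uniqueness, including the same choices $\eps := d(x_1,x_2)$ and $r := \max(d(x_1,a),d(x_2,a))$. Your explicit verification that $h$ is convex and lower semicontinuous (so that the sublevel sets are convex and closed) is slightly more detailed than the paper's, which leaves those checks implicit, but the argument is identical in substance.
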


\begin{proof}
We first show that $h$ is bounded from below. By Proposition~\ref{bac2}, there is a $k \in \N$ such that, for any $x \in X$,
$$h(x) \geq -k(d(x,a) +1) + \frac12 d^2(x,a),$$
and the right hand side is clearly bounded from below by the basic properties of quadratic functions.

Let $m \in \R$ be the infimum of $h$ (it is not $\infty$, since $f$ is proper). We now show the existence of an $x \in X$ with $h(x)=m$. Again by the properties of quadratic functions, there is a $r > 0$ such that, for any $u \in \R$ with $\frac12 u^2 - k(u+1) \leq m+1$, we have that $u \leq r$. Therefore, for any $x \in X$ with $h(x) \leq m+1$, we have that $d(x,a) \leq r$. Put, for any $n \in \N$,
$$C_n := \left\{ x \in X \mid h(x) \leq m + \frac1{n+1} \right\}.$$
We have that $(C_n)_{n \in \N}$ is a non-increasing sequence of bounded (this one from the reasoning above), closed, convex, nonempty subsets of $X$, so, by Proposition~\ref{kohleus}, there is a $x \in \bigcap_{n \in \N} C_n$ and we must have that $h(x)=m$.

Therefore, $m$ is the minimum of $h$. We now show the uniqueness of the minimizer. Let $\psi_\bullet$ be defined as in Theorem~\ref{mainthm}. Assume there are $x \neq y$ with $h(x)=h(y)=m$. Put $\eps:=d(x,y)>0$ and $r:=\max(d(x,a),d(y,a)) >0$. Then
\begin{align*}
h\left(\frac{x+y}2\right) &= f\left(\frac{x+y}2\right) + \frac12 d^2\left(\frac{x+y}2,a\right) \\
&\leq \frac12(f(x)+f(y)) + \frac14 d^2(x,a) + \frac14 d^2(y,a) - \frac12\psi_\eta(r,\eps) = m - \frac12\psi_\eta(r,\eps) < m,
\end{align*}
a contradiction.
\end{proof}

This allows us to extend proximal minimization, an established tool of convex optimization, to this uniformly convex hyperbolic setting (the first such nonlinear generalization, namely to complete CAT(0) spaces, was introduced by Jost \cite{Jos95}). Concretely, we may now define, for any proper, convex, lsc function $f : X \to \R \cup \{\infty\}$, the {\it proximal mapping of $f$}, $\Prox_f : X \to X$, by putting, for any $a \in X$,
$$\Prox_fa := \argmin_{x \in X} \left(f(x) + \frac12 d^2(x,a)\right),$$
and, for any $\lambda > 0$, the {\it proximal mapping of $f$ of order $\lambda$}, $\Prox^\lambda_f : X \to X$, by putting, for any $a \in X$,
\begin{align*}
\Prox^\lambda_fa &:= \Prox_{\lambda f}a \\
&= \argmin_{x \in X} \left(\lambda f(x) + \frac12 d^2(x,a)\right)\\
&= \argmin_{x \in X} \left(f(x) + \frac1{2\lambda} d^2(x,a)\right).
\end{align*}

\begin{proposition}
Let $f : X \to \R \cup \{\infty\}$ be a proper, convex, lsc function and $a \in X$. Then $a$ is a minimizer of $f$ iff it is a fixed point of $\Prox_f$.
\end{proposition}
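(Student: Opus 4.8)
The statement to prove is that $a$ is a minimizer of $f$ if and only if $a = \Prox_f a$. The plan is to prove both directions directly from the definition of $\Prox_f a$ as the unique minimizer of $h(x) := f(x) + \frac12 d^2(x,a)$, which exists by the preceding proposition.

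For the easier direction, I would assume $a$ is a minimizer of $f$ and show $\Prox_f a = a$. The key observation is that $h(a) = f(a) + \frac12 d^2(a,a) = f(a)$, while for any $x \in X$ we have $h(x) = f(x) + \frac12 d^2(x,a) \geq f(a) + \frac12 d^2(x,a) \geq f(a) = h(a)$, using that $a$ minimizes $f$ and that $d^2(x,a) \geq 0$. Hence $a$ minimizes $h$, and by the uniqueness of the minimizer, $\Prox_f a = a$, i.e.\ $a$ is a fixed point.

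For the converse, I would assume $a = \Prox_f a$, so $a$ is the (unique) minimizer of $h$, and derive that $a$ minimizes $f$. Suppose towards a contradiction that $a$ is not a minimizer of $f$, so there is some $y \in X$ with $f(y) < f(a)$. The natural idea is to move slightly from $a$ toward $y$ along the geodesic and show that $h$ strictly decreases, contradicting minimality of $a$. Concretely, for $\lambda \in (0,1]$ set $z_\lambda := (1-\lambda)a + \lambda y = W(a,y,\lambda)$; by convexity of $f$ we have $f(z_\lambda) \leq (1-\lambda)f(a) + \lambda f(y) = f(a) - \lambda(f(a) - f(y))$, and by $(W2)$ together with $(W1)$ (or Proposition~\ref{quad}) the quadratic term satisfies $\frac12 d^2(z_\lambda,a) \leq \frac12 \lambda^2 d^2(a,y)$. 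Adding these yields
$$h(z_\lambda) \leq f(a) - \lambda(f(a)-f(y)) + \tfrac12 \lambda^2 d^2(a,y) = h(a) - \lambda(f(a)-f(y)) + \tfrac12\lambda^2 d^2(a,y).$$
Since $f(a) - f(y) > 0$ is a fixed positive constant while the quadratic term is $O(\lambda^2)$, for $\lambda$ small enough the right-hand side is strictly less than $h(a)$, contradicting that $a$ minimizes $h$.

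The main obstacle is essentially the converse direction, and specifically making the first-order linear decrease dominate the second-order penalty: one must verify that $\frac12 d^2(z_\lambda,a)$ is genuinely quadratic in $\lambda$, which relies on $d(z_\lambda,a) = \lambda d(a,y)$ from axiom $(W2)$ (so the distance scales linearly along the geodesic), and then choose $\lambda$ small enough — for instance any $\lambda < \min\left(1, \frac{2(f(a)-f(y))}{d^2(a,y)}\right)$ — to force the strict inequality. I would take care that $f(y) < \infty$ is guaranteed (otherwise $f(a) > f(y)$ is vacuous), which holds since the contradiction hypothesis supplies a $y$ with $f(y) < f(a) \leq \infty$; if $f(a) = \infty$ then $a$ cannot minimize the proper function $f$ anyway, so one may assume $f(a) < \infty$ from the outset.
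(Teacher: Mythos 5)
Your proposal is correct and takes essentially the same approach as the paper: the forward direction is identical, and your small-$\lambda$ contradiction argument for the converse is just the contrapositive of the paper's direct computation, which uses convexity along the geodesic to get $f(a) \leq (1-t)f(a) + tf(x) + \frac12 t^2 d^2(x,a)$, divides by $t$, and lets $t \to 0$ --- the same linear-versus-quadratic comparison you exploit. The one loose end, that $f(a) < \infty$ (needed for your subtraction $f(a) - \lambda(f(a)-f(y))$, and tacitly assumed in the paper's cancellation as well), is cleaner to settle from the fixed-point hypothesis than from your remark about minimizers: $a = \Prox_f a$ gives $f(a) = h(a) \leq f(x_0) + \frac12 d^2(x_0,a) < \infty$ for any $x_0$ with $f(x_0) < \infty$, which exists since $f$ is proper.
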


\begin{proof}
From left to right, we have that, for any $x \in X$,
$$f(a) + \frac12 d^2(a,a) = f(a) \leq f(x) \leq f(x) + \frac12 d^2(x,a),$$
so
$$a = \argmin_{x \in X} \left(f(x) + \frac12 d^2(x,a)\right) = \Prox_fa.$$
From right to left, we have that, for any $w \in X$,
$$f(a) = f(a) + \frac12 d^2(a,a) = f(\Prox_fa) + \frac12 d^2(\Prox_fa,a) \leq f(w) + \frac12 d^2(w,a),$$
so, for any $x \in X$ and any $t \in (0,1)$, using the convexity of $f$,
$$f(a) \leq f((1-t)a+tx) + \frac12 d^2((1-t)a+tx,a) \leq (1-t)f(a) + tf(x) + \frac12 t^2 d^2(x,a),$$
so, by dividing by $t$,
$$f(a) \leq f(x) + \frac12 t d^2(x,a),$$
and, by taking $t \to 0$, we obtain the conclusion.
\end{proof}

In \cite{BacKoh18}, the authors of that paper explore a plethora of results around proximal mappings with Young functions in uniformly convex Banach spaces. Investigating their extension to complete $UCW$-hyperbolic spaces would be a natural continuation of the present paper.


\begin{thebibliography}{99}

\bibitem{Ale51}
A. D. Aleksandrov,
A theorem on triangles in a metric space and some of its applications.
{\it Trudy Math. Inst. Steklov} 38, 4--23,
1951.

\bibitem{Bac14}
M. Ba\v{c}\'ak,
{\it Convex analysis and optimization in Hadamard spaces}.
De Gruyter, 2014.

\bibitem{BacKoh18}
M. Ba\v{c}\'ak, U. Kohlenbach,
On proximal mappings with Young functions in uniformly convex Banach spaces.
{\it J. Convex Anal.} 25, 1291--1318,
2018.

\bibitem{BauCom17}
H. Bauschke, P. Combettes,
{\it Convex Analysis and Monotone Operator Theory in Hilbert Spaces}. Second Edition.
Springer,
2017.

\bibitem{Cla36} 
J. A. Clarkson,
Uniformly convex spaces. 
{\it Trans. Amer. Math. Soc.} 40, no. 3, 415--420,
1936.

\bibitem{FirLeuXX}
P. Firmino, L. Leu\c stean,
Rates of ($T$-)asymptotic regularity of the generalized Krasnoselskii-Mann-type iteration.
{\it Portugaliae Mathematica}, Online First, available at \url{https://doi.org/10.4171/PM/2148}, 2025.

\bibitem{GoeKir83}
K. Goebel, W. A. Kirk,
Iteration processes for nonexpansive mappings.
In: S. P. Singh, S. Thomeier, B. Watson (eds.),
{\it Topological methods in nonlinear functional analysis} (pp. 115--123). American Mathematical Society, Providence, RI, 1983.

\bibitem{GoeRei84}
K. Goebel, S. Reich,
{\it Uniform convexity, hyperbolic geometry, and nonexpansive mappings}. Marcel Dekker,
Inc., New York, 1984.

\bibitem{Gro87}
M. Gromov,
Hyperbolic groups.
In: S. M. Gersten (ed.), {\it Essays in group theory}.
Math. Sci. Res. Inst. Publ., 8, Springer, New York, pp. 75--264,
1987.

\bibitem{Jos95}
J. Jost,
Convex functionals and generalized harmonic maps into spaces of non positive curvature,
{\it Comment. Math. Helvetici} 70, 659--673, 1995.

\bibitem{Koh05}
U. Kohlenbach,
Some logical metatheorems with applications in functional analysis.
{\it Trans. Amer. Math. Soc.} 357, no. 1, 89--128,
2005.

\bibitem{Koh08}
U. Kohlenbach,
{\it Applied proof theory: Proof interpretations and their use in mathematics}.
Springer Monographs in Mathematics, Springer,
2008.

\bibitem{Koh11} 
U. Kohlenbach,
On quantitative versions of theorems due to F. E. Browder and R. Wittmann.
{\it Adv. Math.}  226, 2764--2795,
2011. 

\bibitem{Koh19}
U. Kohlenbach,
Proof-theoretic methods in nonlinear analysis.
In: B. Sirakov, P. Ney de Souza, M. Viana (eds.), {\it Proceedings of the International Congress of Mathematicians 2018
(ICM 2018)}, Vol. 2 (pp. 61--82). 
World Scientific, 2019.

\bibitem{KohLeu10}
U. Kohlenbach, L. Leu\c{s}tean,
Asymptotically nonexpansive mappings in uniformly convex hyperbolic spaces.
{\it Journal of the European Mathematical Society} 12, 71--92,
2010.

\bibitem{KohNic}
U. Kohlenbach, A. Nicolae,
A proof-theoretic bound extraction theorem for CAT($\kappa$)-spaces.
{\it Studia Logica} 105, no. 3, 611--624,
2017.

\bibitem{KohSip21}
U. Kohlenbach, A. Sipo\c s,
The finitary content of sunny nonexpansive retractions.
{\it Communications in Contemporary Mathematics}, Volume 23, Number 1, 19550093 [63 pages],
2021.

\bibitem{Kre}
G. Kreisel,
On the interpretation of non-finitist proofs, part II: Interpretation of number theory, applications.
{\it Journal of Symbolic Logic} 17, 43--58, 1952.

\bibitem{Leu07}
L. Leu\c{s}tean,
A quadratic rate of asymptotic regularity for CAT(0)-spaces.
{\it Journal of Mathematical Analysis and Applications} 325, 386--399,
2007.

\bibitem{Leu10}
L. Leu\c{s}tean,
Nonexpansive iterations in uniformly convex $W$-hyperbolic spaces.
In: A. Leizarowitz, B. S. Mordukhovich, I. Shafrir, A. Zaslavski (eds.), {\it Nonlinear Analysis and Optimization I: Nonlinear Analysis} (pp. 193--209), Contemporary Mathematics 513, American Mathematical Society,
2010.

\bibitem{Leu14}
L. Leu\c{s}tean,
An application of proof mining to nonlinear iterations.
{\it Annals of Pure and Applied Logic} 165, 1484--1500, 2014.

\bibitem{Pin24}
P. Pinto,
Nonexpansive maps in nonlinear smooth spaces.
{\it Transactions of the American Mathematical Society} 377, 6379--6426,
2024.

\bibitem{PinSip}
P. Pinto, A. Sipo\c s,
Products of hyperbolic spaces.
arXiv:2408.14093 [math.MG], 2024.

\bibitem{Rei80}
S. Reich, 
Strong convergence theorems for resolvents of accretive operators in Banach spaces.
{\it J. Math. Anal. Appl.} 75, 287--292,
1980.

\bibitem{ReiSha90}
S. Reich, I. Shafrir.
Nonexpansive iterations in hyperbolic spaces.
{\it Nonlinear Analysis. Theory, Methods \& Applications} 15, 537--558, 1990.

\bibitem{SaeYot20}
S. Saejung, P. Yotkaew,
On $\Delta$-convergence of iterative sequences in CAT(0) spaces.
{\it Vietnam Journal of Mathematics} 48, 35--45, 2020.

\bibitem{Sip17}
A. Sipo\c s,
Effective results on a fixed point algorithm for families of nonlinear mappings.
{\it Annals of Pure and Applied Logic}, Volume 168, Issue 1, 112--128, 2017.

\bibitem{Sip21}
A. Sipo\c s,
Construction of fixed points of asymptotically nonexpansive mappings in uniformly convex hyperbolic spaces.
{\it Numerical Functional Analysis and Optimization} 42, 696--711, 2021.

\bibitem{Tak70}
W. Takahashi,
A convexity in metric space and nonexpansive mappings. I.
{\it Kodai Mathematical Seminar Reports} 22,
142--149,
1970.

\bibitem{TakToy03}
W. Takahashi, M. Toyoda,
Weak convergence theorems for nonexpansive mappings and monotone mappings. {\it Journal of Optimization Theory and Applications} 118, 417--428,
2003.

\bibitem{TanXu}
K.-K. Tan, H.-K. Xu,
Approximating fixed points of non-expansive mappings by the Ishikawa iteration process.
{\it Journal of Mathematical Analysis and Applications} 178, 301--308, 1993.

\bibitem{Tao08} 
T. Tao, Soft analysis, hard analysis, and the finite 
convergence principle. Essay posted May 23, 2007. Appeared in: 
T. Tao, {\it Structure and Randomness: Pages from Year One of a Mathematical 
Blog}. AMS, 298 pp., 2008.

\bibitem{Tao08A} 
T. Tao,
Norm convergence of multiple ergodic averages for commuting transformations.
{\it  Ergodic Theory \& Dynamical Systems} 28, 657--688,
2008. 

\bibitem{Xu}
H.-K. Xu, Inequalities in Banach spaces with applications.
{\it Nonlinear Anal.} 16, 1127--1138, 1991.

\bibitem{XuRoach}
Z.-B. Xu, G. F. Roach, Characteristic inequalities of uniformly convex and uniformly smooth Banach spaces.
{\it J. Math. Anal. Appl.} 157, no. 1, 189--210, 1991.

\bibitem{Zal83}
C. Z\u alinescu,
On uniformly convex functions.
{\it J. Math. Anal. Appl.} 95, no. 2, 344--374,
1983.

\end{thebibliography}
\end{document}